\DeclareMathOperator{\persp}{\mathcal{P}}
\newcommand{\strict}[1]{\operatorname{strict} #1}
\newcommand{\slack}[1]{\operatorname{slack} #1}
\newcommand{\subspace}[1]{\operatorname{sub} #1}
\newcommand{\E}{\euclid}
\newcommand{\EE}{\euclid^{\prime}}
\newcommand{\K}{K}
\newcommand{\C}{\mathscr{C}}
\newcommand{\CC}{\C^{\prime}}
\newcommand{\conel}[1][\K]{\prec_{#1}}
\newcommand{\coneleq}[1][\K]{\preceq_{#1}}
\newcommand{\coneg}[1][\K]{\succ_{#1}}
\newcommand{\conegeq}[1][\K]{\succeq_{#1}}
\newcommand{\X}{S}
\newcommand{\XX}{X^{\prime}}
\newcommand{\opt}[1]{z^{\ast}_{#1}}
\newcommand{\cstar}{\opt{P}}
\newcommand{\optdual}{\opt{D}}
\newcommand{\dualfeas}{\mathcal{C}_{d}}
\newcommand{\primalfeas}{\mathcal{C}_{p}}
\newcommand{\mapA}[1][A]{\mathcal{#1}}
\newcommand{\A}[1][]{\mathcal{A}#1}
\renewcommand{\L}{\mapA[L]}
\newcommand{\G}{\mapA[G]}
\newcommand{\Lp}{\mathcal{L}_{p}}
\newcommand{\Ld}{\mathcal{L}_{d}}
\newcommand{\mapAad}[1][]{\adjoint{\A}#1}
\newcommand{\adjLp}{\mathcal{L}^{\star}_{p}}
\newcommand{\adjLd}{\mathcal{L}^{\star}_{d}}
\title{On strong duality, theorems of the alternative, and projections in conic optimization}
\author{Temitayo Ajayi\affilinfo{Department of Computational and Applied Mathematics, Rice University, USA} 
\and Akshay Gupte\affilinfo{School of Mathematics, University of Edinburgh, UK}[akshay.gupte@ed.ac.uk] 
\and Amin Khademi\affilinfo{Department of Industrial Engineering, Clemson University, USA} 
\and Andrew J. Schaefer\affilinfo{1}}
\date{November 10, 2021}
\begin{document}

\maketitle

\begin{abstract}
A conic program is the problem of optimizing a linear function over a closed convex cone intersected with an affine preimage of another cone. We analyse three constraint qualifications, namely a Closedness CQ, Slater CQ, and Boundedness CQ (also called Clark-Duffin theorem), that are sufficient for achieving strong duality and show that the first implies the second which implies the third, and also give a more general form of the third CQ for conic problems. Furthermore, two consequences of strong duality are presented, the first being a theorem of the alternative on almost feasibility (also called weak infeasibility), and the second being an explicit description of the projection of conic sets onto linear subspaces, akin to using projection cones for polyhedral sets.

\mykeywords{Closedness of adjoint image}{Slater constraint qualification}{Clark-Duffin theorem}[Theorem of Alternative][Projection onto subspace][Bounded feasible set]

\mysubjclass{90C46, 90C25, 49N15, 90C22}
\end{abstract}

\section{Introduction}

Let $C\subseteq\E$ and $\K\subseteq\EE$ be nonempty closed convex cones in Euclidean spaces $\E$ and $\EE$ having respective inner-products $\inner{\cdot}{\cdot}[\E]$ and $\inner{\cdot}{\cdot}[\EE]$. For a linear map $\A\colon\E\to\EE$ and vectors $b\in\EE$ and $c\in\E$, the conic optimization problem is
\begin{subequations}
\begin{equation}	\label{def:primal}
\cstar \eq \sup\left\{\inner{c}{x}[\E]\sep \A[x] \coneleq b, \ x\in C \right\}.
\end{equation}
where the constraints $\A[x] \coneleq b$ mean that $b - \A[x]\in\K$.  Because $\K$ can be a Cartesian product of finitely many cones, the conic inequalities $\A[x] \coneleq b$ can incorporate multiple constraints over different cones. Equality constraints can be represented as conic constraints with respect to the singleton cone $\{\zeros\}$. 
The conic constraint $x\in C$ is kept separate in \eqref{def:primal} because it involves a special linear map (identity map) and constant vector being all zeros. For nontriviality, we assume that $C\neq\{\zeros\}$ and 
that at least one of $C$ or $\K$ is not equal to its ambient Euclidean space. The Lagrangian dual problem is the conic program
\begin{equation}		\label{def:dual}
\optdual \eq \inf\left\{\inner{b}{y}[\EE] \sep \mapAad[y] \conegeq[\dualcone{C}] c, \ y \in \dualcone{\K} \right\}.
\end{equation}
\end{subequations}
for the adjoint linear map $\mapAad\colon\EE\to\E$ and dual cones $\dualcone{\K}\subseteq\EE$ and $\dualcone{C}\subset\E$. 
The primal-dual pair are symmetric since the conic dual of \eqref{def:dual} yields \eqref{def:primal}.

Duality gap is the nonnegative difference $\optdual - \cstar$. Strong duality means that $\cstar = \optdual$ (allowing for values $\pm \infty$), and if these values are finite then at least one of the two problems is solvable (i.e., has an optimal solution). When both $\K$ and $C$ are polyhedral, feasibility of one of the problems guarantees strong duality, but this is not true for general $\K$ and $C$. There is abundant literature on sufficient conditions for strong duality in conic programs, see  \citep{luo1997duality,nesterov1992conic,bentalnem2001,tunccel2012strong,andersen2002notes,pataki2013strong,Ramana_1997,ramana97sdp,Shapiro_2001,schurr2007universal,zualinescu2008zero,kostyukova2021strong,glineur2001proving}. The most well-known of these conditions is the generalised Slater constraint qualification (CQ) which requires that one of the two problems be strictly feasible (in the sense of satisfying the conic constraints through their relative interiors). There are many proofs of sufficiency of Slater CQ, and it also follows from the classical Fenchel duality theorems \citep[chap.~31]{rockafellar1970convex}. There have been studies on the geometry of Slater CQ and degeneracy of solutions \cite{Drusvyatskiy_2017,Pataki_2001,dur2017genericity}. Primal problem~\eqref{def:primal} can be extended to what is called an abstract convex program in Banach spaces by considering a $\K$-convex function $g(x)$ and replacing $b-\A[x]\in\K$ with $-g(x)\in\K$; strong duality results for these can be found in \citep{Borwein_1981,ban2009duality,jeyakumar1990zero,jeyakumar2008complete,boct2007new,boct2005strong}.

In this paper, we make two observations about sufficient conditions for strong duality between~\eqref{def:primal} and \eqref{def:dual}. We add to the vast literature on Slater CQ by showing that it follows from a more general Closedness CQ shown by \citep{Shapiro_2001,barvinok2002course} but which is probably not as well-known. Our argument relies on known sufficient conditions for closedness of linear image of a closed convex cone and noting a property about perspective images of a convex cone. We also give another proof for the sufficiency of the Boundedness CQ, which has been called the Clark-Duffin theorem \citep{duffin1978clark} for convex programs with inequality constraints. A restricted version of this with $\K=\{\zeros\}$ and $C$ being a proper cone appeared in \citep[Theorem 3.15]{schurr2007universal} in the context of universal duality. Our derivation relies on a conic version of the Gordan's theorem of the alternative. Given the importance of these two CQs, we also present several necessary/sufficient conditions for them to hold.

Another set of contributions are obtaining consequences of strong duality through Slater CQ. We give a theorem of the alternative (\mythref{polaralmost}) for almost feasibility of a problem, and give an explicit description of the projection of a conic set onto a linear subspace (\mythref{projection}). The former generalises \citep[Proposition 1.7.1]{bentalnem2001} which deals with a proper cone $\K$ and $C=\E$, and the latter extends the notion of projection cones for polyhedral sets.

\textsection\ref{sec:prelim} specifies the notation used in this paper, basic definitions and some fundamental results from convex analysis. \textsection\ref{sec:basic} gives some detailed observations about conic optimization, in particular, a basis representation of the dual that implies any of the duality results can be translated to be in terms of the span of $C$, and characterisations of the recession cone of $X$ and its polar cone. The three sufficient conditions for strong duality --- Closedness CQ, Slater CQ and Boundedness CQ, are analysed in \textsection\ref{sec:closed} for their inter-relationships and conditions for them to hold. \textsection\ref{sec:almostfeas} gives a new theorem of the alternative and \textsection\ref{sec:proj} projection of conic sets. Primal-dual symmetry means that results for one problem can be directly extended to the other problem, and so we generally prove our results only for the primal and omit analogous statements/proofs for the dual.

\section{Preliminaries} \label{sec:prelim} 


\subsection{General Notation \& Terminology}	\label{sec:notation}

A Euclidean space is a finite-dimensional inner-product space over the reals.  
Unless there is ambiguity, we drop the subscripts in the notation for inner-products. 
The vector of all zeros is written as $\zeros$. The Minkowski sum of two sets $\X_{1},\X_{2}\subset\E$ is $\X_{1}+\X_{2}$, their Cartesian product (equiv. direct sum) is  $\X_{1}\times\X_{2}$, and we write the Minkowski difference as $\X_{1} - \X_{2}$ and define it as the Minkowski sum $\X_{1} + (-\X_{2})$. 
An ordered pair of $x\in\E$ and $y\in\EE$ is the tuple $(x,y)$ which belongs to the product space $\E\times\EE$. 
For a set $\X\subset\E$, $\relint{\X}$ is the relative interior, $\intr{\X}$ is the interior, $\closure{\X}$ is the closure, $\boundr{\X}:=\closure{\X}\setminus\relint{\X}$ is the relative boundary, $\aff{\X}$ is the affine hull, $\subspace{\X}$ is the linear subspace parallel to $\aff{\X}$ (equal to $\aff{\X} - x$ for any $x\in\X$), and $\linhull{\X}$ is the linear hull (span). 
The orthogonal complement of a linear subspace $L\subset\E$, also called its annihilator, is denoted by $L^{\bot}$. 
The recession cone of a nonempty convex set $\X\subset\E$ is the convex cone $\rec{\X} := \{r\in\E \colon x + \mu r\in\X, \, \forall x\in\X, \mu\ge 0\}$; when $\X$ is also closed, it is known that $\rec{X} =  \{r\in\E \colon \exists\,x\in\X \text{ s.t. } x + \mu r\in\X, \, \forall \mu\ge 0\}$. 
The lineality space of $\X$ is defined as $\lineal{\X} := \rec{\X}\cap-\rec{\X}$, and when this is trivial (equal to $\{\zeros\}$) the set is called pointed. 

The polar cone and the dual cone of a convex cone $\C$ are denoted by $\polar{\C}$ and $\dualcone{\C}$. For a convex cone $\C$ containing $\zeros$, the binary relation $\coneleq[\C]$ is a quasi-ordering which becomes a partial order when $\C$ is pointed. It is additive equivariant and distributes over the Cartesian product of cones. 
Strict inequality for the binary relation $\coneleq[\C]$ is written as $x\conel[\C] y$ and is defined as $y-x\in\relint{\C}$. 

For a map $\L\colon\E\to\EE$, the image of a set $\X\subset\E$ is $\L(\X) := \{\L(x)\colon x\in\X \}$, the image of $\L$ is $\range{\L} := \L(\E)$, the preimage of a set $\X\subset\EE$ is $\L^{-1}(\X) := \{x\in\E\sep \L(x)\in\X \}$. 
We work with linear and affine maps only. The kernel of a linear map $\L$ is $\kernel\L := \L^{-1}\{\zeros\}$.  
The adjoint of a linear map $\L$ is a unique linear map $\adjoint{\L}\colon \EE\to\E$
which satisfies $\inner{\L(x)}{y}[\EE] = \inner{\adjoint{\L}(y)}{x}[\E]$ for all $x\in\E,y\in\EE$. 
Affine maps are translates of linear maps. For every affine map $\G$ we have the associated linear map
\[
\L_{\G}(x) \define \G(\zeros) - \G(x). 
\]




The feasible sets of the primal~\eqref{def:primal} and dual~\eqref{def:dual} are denoted by
\begin{subequations}	\label{def:XY}
\begin{align}
X \eq X(b) &\define \left\{x \in C\sep \A[x] \coneleq b \right\} \eq \left\{x \in C\sep \exists\, s\in\K \text{ s.t. } s = b-\A[x] \right\}, \\
Y \eq Y(c) &\define \left\{y \in \dualcone{\K} \sep \mapAad[y] \conegeq[\dualcone{C}] c \right\} \eq \left\{y \in \dualcone{\K} \sep \exists\, w\in\dualcone{C} \text{ s.t. } w = \mapAad[y] - c \right\}.
\end{align}
These are parametrised by their respective right-hand sides because it will be sometimes be necessary to refer to them in the parametric form. We will use the parametric and non-parametric forms as appropriate depending on the context. 
The feasible sets can also be expressed as affine preimages of closed convex sets,
\begin{equation}	\label{feaspreimg}
X \eq G_{p}^{-1}(\K\times C), \qquad Y \eq G_{d}^{-1}(\dualcone{C}\times\dualcone{\K}).
\end{equation}
where $G_{p}\colon x\mapsto (b-\A[x],x)$ and $G_{d}\colon y\mapsto(\mapAad[y] -c,y)$. The adjoints of the corresponding  linear maps $\L_{G_{p}} = (\A[x],-x)$ and $\L_{G_{d}} = (-\mapAad[y],-y)$ are
\begin{equation}	\label{adjL}
\adjoint{\L}_{G_{p}}(y,w) \eq \mapAad[y] - w, \qquad \adjoint{\L}_{G_{d}}(x,s) \eq -\A[x] - s.
\end{equation}
\end{subequations}
The sets of feasible right-hand sides for the primal and dual are represented by the following two convex cones
\begin{equation}	\label{feascones}
\primalfeas \define \A(C) + \K \eq - \adjoint{\L}_{G_{d}}(C\times\K), \qquad \dualfeas \define \mapAad(\dualcone{\K}) - \dualcone{C} \eq \adjoint{\L}_{G_{p}}(\dualcone{\K}\times\dualcone{C}).
\end{equation}

\begin{observation}		\thlabel{genfeas}
$\primalfeas = \{b \in \EE \colon X(b) \neq \emptyset \}$ and $\relint{\primalfeas} = \{b\in\EE\colon \exists\, x \in \relint{C} \text{ s.t. } \A[x] \conel b \}$. Similarly, $\dualfeas = \{c\in\E\colon Y(c) \neq \emptyset \}$ and $\relint{\dualfeas} = \{c\in\E\colon \exists\, y\in\relint{\dualcone{\K}} \text{ s.t. } \mapAad[y] \coneg[\dualcone{C}] c \}$.
\end{observation}

For completeness, a proof of this is given in Appendix~\ref{sec:missing}.


\subsection{Convexity}



Every nonempty convex set in $\E$ has a nonempty relative interior. The following well-known results from convex analysis about closure and relative interior will be used as standard facts throughout this paper without necessarily referencing them every time they are used.

\begin{lemma}[cf. {\citep{rockafellar1970convex}}]		\thlabel{facts}
Let $\X$ be a nonempty convex set and $\G$ be an affine map.
\begin{enumerate}
\item $\relint{\X} = \relint{(\closure{\X})}\subseteq\closure{(\relint{\X})} = \closure{\X}$.
\item $\relint{\G(\X)} = \G(\relint{\X})\subseteq\G(\closure{\X}) \subseteq \closure{\G(\X)}$, with $\G(\closure{\X}) = \closure{\G(\X)}$ when $\G(\closure{\X})$ is closed.
\item $\G^{-1}(\closure{\X}) \supseteq \closure{\G^{-1}(\X)}\supseteq\relint{\G^{-1}(\X)}\supseteq\G^{-1}(\relint{\X})$,  with $\relint{\G^{-1}(\X)}=\G^{-1}(\relint{\X})$ and $\G^{-1}(\closure{\X}) = \closure{\G^{-1}(\X)}$ when $\G^{-1}(\relint{\X})\neq\emptyset$.
\item For two convex sets $\X_{1},\X_{2}\neq\emptyset$, we have
\begin{enumerate}
\item $\relint{(\X_{1}\times\X_{2})} = \relint{\X_{1}}\times\relint{\X_{2}}$ and $\closure{(\X_{1}\times\X_{2})} = \closure{\X_{1}}\times\closure{\X_{2}}$.
\item if $\relint{\X_{1}}\cap\relint{\X_{2}}\neq\emptyset$, then $\relint{(\X_{1}\cap\X_{2})} = \relint{\X_{1}}\cap\relint{\X_{2}}$ and $\closure{(\X_{1}\cap\X_{2})} = \closure{\X_{1}}\cap\closure{\X_{2}}$.
\item $\relint{(\X_{1}\pm\X_{2})} = \relint{\X_{1}} \pm \relint{\X_{2}}$.
\end{enumerate}
\end{enumerate}
\end{lemma}

%

The Minkowski sum of closed sets is not closed in general, see \citep[chap. 9]{rockafellar1970convex} for sufficient conditions for the sum to be closed.

Now let $\C$ be a closed convex cone. Since $\zeros\in\C$, the affine hull and span of a cone are equal and written as $\linhull{\C}$, the orthogonal complement of $\linhull{\C}$ is written as $\C^{\bot}$. The following relationships can be used to obtain dual counterparts of the conditions/assertions on the primal problem.

\begin{lemma}[cf. {\cite[Corollary 1]{luo1997duality}}]	\thlabel{spandual}
$\C^{\bot} = \dualcone{\C}\cap\polar{\C}$, and $\linhull{\polar{\C}} = (\lineal{\C})^{\bot}$.
\end{lemma}

%

A pointed (resp. full-dimensional) $\C$ is equivalent to $\polar{\C}$ being full-dimensional (resp. pointed). Another basic result on polarity is an identity for the polar of a linear preimage of a convex cone. This is also related to the Farkas lemma for conic linear systems \citep[cf.][Theorem 2.1]{Dinh2014farkas} which states that for any $c\in\euclid$, exactly one of the following holds: either $c\in\closure{\adjoint{\L}(\dualcone{\C})}$ or there exists $y\in\euclid$ such that $\inner{c}{y} > 0$ and $\L(y)\coneleq[\C]\zeros$.

\begin{lemma}[cf. {\citep[Lemma 3]{luo1997duality}}]	\thlabel{polarinv}
A linear map $\L$ and closed convex cone $\C$ satisfy\footnote{The first identity follows from the second identity after applying the Bipolar Theorem \citep[Theorem 14.5]{rockafellar1970convex} to the set $\L^{-1}(-\X)$ (which contains $\zeros$ due to $\zeros\in\X$) and using the fact that a convex set and its closure have the same polar. } 
\[
\polar{\left(\L^{-1}(-\C)\right)} \eq \closure{\adjoint{\L}(\dualcone{\C})}, \qquad \L^{-1}(-\C) \eq 
\polar{(\adjoint{\L}(\dualcone{\C}))}.
\]
\end{lemma}

Linear subspaces are closed convex cones, but we may sometimes need to exclude such pathological cones, which can be characterised as follows:
\begin{equation}	\label{subspace}
\C \text{ is a linear subspace} \iff \C\eq\lineal{\C} \iff \dualcone{C} \text{ is a linear subspace} \iff \dualcone{\C}\eq\C^{\bot}.
\end{equation}
For a non-subspace cone the origin is not contained in the relative interior of the cone or that of the dual cone\footnote{This is a generalization of $\zeros\notin\intr{\dualcone{\C}}$ when $\C$ is pointed which is directly due to the fact that a closed convex set containing $\zeros$ is unbounded if and only if $\zeros$ is not in the interior of its polar set  \cite[Corollary 14.5.1]{rockafellar1970convex}}, because the boundary of the dual cone contains the complement.

\begin{lemma} \thlabel{relintprop3}
If $\C$ is not a linear subspace and not equal to $\{\zeros\}$, then 
\begin{enumerate}
\item $\C^{\bot}\subseteq\boundr{\dualcone{\C}}$ and $\lineal{\C}\subseteq\boundr{\C}$,
\item $\zeros\notin\relint{\C}\cup\relint{\dualcone{\C}}$.
\end{enumerate}
\end{lemma} 




We also have a conic version of Gordan's theorem of the alternative from linear programming. 

\begin{lemma}[Conic version of Gordan's theorem]	\thlabel{gordan}
Consider the following two statements:
\begin{enumerate}
\item there exists $x\in C\setminus\{\zeros\}$ such that $-\A[x] \in \K $, 
\item there exists $y\in\relint{\dualcone{\K}}$ such that $\mapAad[y] \in \relint{\dualcone{C}}$. 
\end{enumerate}
If (1) is not true, then (2) must be true. Furthermore, if at least one of $\mapA^{-1}(\K)$ or $C$ is not a subspace, then exactly one of the two statements is true.
\end{lemma}

Different versions of this theorem of the alternative have appeared for e.g. in \citep[Corollary~2]{luo1997duality} and \citep[Lemma~2.1]{schurr2007universal}, but our version is more general as it allows for two cones $\K$ and $C$ and also notes the non-subspace requirement to obtain equivalence. Note that the non-subspace condition is needed to obtain the reverse direction $(2) \implies \neg ~ (1)$, because if this assumption does not hold, then the simple example from linear programming where the two systems are $\{(x_{1},x_{2})\in\real^{2}\colon x_{1}\le 0, \ -x_{1}\le 0 \}$ and $\{(y_{1},y_{2})\in\real^{2}_{+}\colon y_{1}=y_{2}\}$, tells us that both statements can be simultaneously true.

The proofs for the previous two lemmas are given in Appendix~\ref{sec:missing} for completeness.

\section{Basic Results}	\label{sec:basic}

Assuming feasibility, note that $\cstar = +\infty$ if  $c\notin(\lineal{X})^{\bot}$ and $\optdual = -\infty$ if $b\notin(\lineal{Y})^{\bot}$. These conditions are trivially satisfied when the respective feasible sets are pointed, but they are not necessary for unboundedness. Because $(\lineal{X})^{\bot} = \dualcone{(\lineal{X})}\cap\polar{(\lineal{X})}\subset\polar{(\rec{X})}$ and similarly $(\lineal{Y})^{\bot}\subset\polar{(\rec{Y})}$, another sufficient condition for unboundedness that is also not necessary in general is $c\notin\polar{(\rec{X})}$ and $b\notin\polar{(\rec{Y})}$, respectively. Trivial cases for computing $\cstar$ and $\optdual$ would be when the objective functions are constant-valued over the respective feasible sets. For the primal, $\inner{c}{x}$ is constant-valued over $X$ if and only if there exists some $u\in\E$ such that $u+c^{\bot}\supset X$, and for the dual, $\inner{b}{y}$ is constant-valued over $Y$ if and only if there exists some $u\in\EE$ such that $u+b^{\bot}\supset Y$. 

Now let us give some detailed observations on conic optimization.

\subsection{Basis Representation of the Dual}

Ordinarily, as seen in~\eqref{def:dual}, we use the adjoint of $\mapA$ to write the constraints of the dual. When given a basis for the span of $C$, the dual problem can be written using a linear map that depends on the basis, instead of using the adjoint of $\mapA$.

\begin{proposition}	\thlabel{dualbasis}
Let $B=\{v^{1},\dots,v^{m}\}$ be an orthonormal basis of $\linhull{C}$ and denote the linear map $\mapA[B]\colon y \in\EE \mapsto \sum_{j=1}^{m}\inner{\A[v^{j}]}{y}v^{j} \in \linhull{C}$. We have $\optdual = \inf\{\inner{b}{y}\colon \mapA[B]y \conegeq[\dualcone{C}] c,\ y\in\dualcone{\K}\}$.
\end{proposition}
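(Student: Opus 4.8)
The plan is to recognize $\mapA[B]$ as the orthogonal projection of $\mapAad[y]$ onto the subspace $\aff{C}$, and then to show that this projection leaves the dual feasibility constraint unchanged because the discarded component always lies in $C^{\bot}\subseteq\dualcone{C}$. Consequently the two descriptions define the same dual feasible region, and since the objective $\inner{b}{y}$ is identical in both, the infima agree.

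First I would compute $\mapA[B]$ explicitly via the defining relation of the adjoint. For any $y\in\EE$ and each basis vector $v^{j}$ we have $\inner{\A[v^{j}]}{y} = \inner{v^{j}}{\mapAad[y]}$, so
\[
\mapA[B]\,y \eq \sum_{j=1}^{m}\inner{v^{j}}{\mapAad[y]}\,v^{j}.
\]
Because $B=\{v^{1},\dots,v^{m}\}$ is an \emph{orthonormal} basis of $\aff{C}=\linhull{C}$, the right-hand side is exactly $\proj_{\aff{C}}(\mapAad[y])$, the orthogonal projection of $\mapAad[y]$ onto $\aff{C}$. Hence the difference $\mapAad[y]-\mapA[B]\,y$ is the component of $\mapAad[y]$ orthogonal to $\aff{C}$, and therefore $\mapAad[y]-\mapA[B]\,y\in C^{\bot}$ for every $y\in\EE$.

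Next I would invoke the identity $C^{\bot}=\dualcone{C}\cap\polar{C}$ recorded in the preliminaries, which yields $C^{\bot}\subseteq\dualcone{C}$; since $C^{\bot}$ is a linear subspace, both $\mapAad[y]-\mapA[B]\,y$ and its negative lie in $\dualcone{C}$. As $\dualcone{C}$ is a convex cone and hence closed under addition, adding or subtracting a vector in $C^{\bot}$ cannot change membership in $\dualcone{C}$. Writing $\mapA[B]\,y-c=(\mapAad[y]-c)-(\mapAad[y]-\mapA[B]\,y)$ and $\mapAad[y]-c=(\mapA[B]\,y-c)+(\mapAad[y]-\mapA[B]\,y)$ then gives the equivalence $\mapAad[y]\conegeq[\dualcone{C}]c \iff \mapA[B]\,y\conegeq[\dualcone{C}]c$. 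Intersecting with $y\in\dualcone{\K}$ shows the two feasible sets coincide, so the two infima are equal.

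The argument is immediate once the projection interpretation is in hand; the only step needing care is confirming that $\mapA[B]$ is genuinely the \emph{orthogonal projection} onto $\aff{C}$ and not merely some map into $\aff{C}$, which is precisely where orthonormality of $B$ (together with the adjoint relation) is used. Beyond correctly tracking the two inner-product spaces there is no real obstacle.
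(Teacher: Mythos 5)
Your proposal is correct, and it takes a genuinely different route from the paper. The paper does not compare feasible sets at all: it starts from the Lagrangian form $\optdual = \inf_{y\in\dualcone{\K}}\sup_{x\in C}\,\inner{c}{x} + \inner{y}{b-\A[x]}$, proves the identity $\inner{\A[x]}{y}[\EE] = \inner{x}{\mapA[B]y}[\E]$ for $x\in C$ by expanding $x$ in the basis $B$, and then reads off the constraint $\mapA[B]y \conegeq[\dualcone{C}] c$ from the requirement that the inner supremum $\sup_{x\in C}\inner{c-\mapA[B]y}{x}$ be finite (i.e.\ $c-\mapA[B]y\in\polar{C}$). You instead work directly from the conic-dual definition \eqref{def:dual} and show that the two constraint systems cut out \emph{the same} feasible set: $\mapA[B]y$ is the orthogonal projection of $\mapAad[y]$ onto $\aff{C}$, so $\mapAad[y]-\mapA[B]y\in C^{\bot}$, and since $C^{\bot}$ is a linear subspace contained in $\dualcone{C}$ (via $C^{\bot}=\dualcone{C}\cap\polar{C}$) while $\dualcone{C}$ is closed under addition, shifting by this component preserves membership in $\dualcone{C}$ in both directions. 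Your argument is sound at every step — in particular the two-sided use of the subspace property of $C^{\bot}$ is exactly what makes the equivalence an ``if and only if.'' What your route buys is a strictly stronger conclusion (the feasible regions coincide as sets, not merely their infima) obtained by elementary set-level reasoning, with no appeal to the identification of \eqref{def:dual} with the Lagrangian dual; what the paper's route buys is a self-contained re-derivation showing that the $\mapA[B]$-formulation arises naturally \emph{as} the Lagrangian dual restricted to $\aff{C}$, which is conceptually aligned with how the dual was introduced.
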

\begin{proof}
Any $x\in C$ can be written as $x = \sum_{i=1}^{m}\alpha_{i}v^{i}$ for some $\alpha\in\real^{m}$. We have
\begin{align*}
\inner{x}{\mapA[B]y}[\E] \eq \inner{\sum_{i=1}^{m}\alpha_{i}v^{i}}{\sum_{j=1}^{m}\inner{\A[v^{j}]}{y}[\EE]v^{j}}[\E] &\eq \sum_{i=1}^{m}\sum_{j=1}^{m}\alpha_{i}\inner{\A[v^{j}]}{y}[\EE]\inner{v^{i}}{v^{j}}[\E] \\
&\eq \sum_{j=1}^{m}\alpha_{j}\inner{\A[v^{j}]}{y}[\EE] \eq \inner{\A[(\sum_{j=1}^{m}\alpha_{j}v^{j})]}{y}[\EE],
\end{align*}
where the penultimate equality is because of orthonormality of the basis. Thus,
\begin{equation} \label{dualbasiseq}
\inner{\A[x]}{y}[\EE] \eq \inner{x}{\mapA[B]y}[\E], \quad x\in C, y\in \EE.
\end{equation}

Since $\optdual$ is the Lagrangian dual of the primal, we have $\optdual = \inf_{y\in\dualcone{\K}}\sup_{x\in C}\inner{c}{x}[\E] + \inner{y}{b - \A[x]}[\EE]$. The inner maximization objective becomes $\inner{b}{y}[\EE] + \inner{c}{x}[\E] - \inner{y}{\A[x]}[\EE]$, and then equation~\eqref{dualbasiseq} transforms this to  $\inner{b}{y}[\EE] + \inner{c - \mapA[B]y}{x}[\E]$. Hence, the inner maximum is finite (and equal to zero) if and only if $c - \mapA[B]y \in \polar{C}$, which is the dual constraint $\mapA[B]y \conegeq[\dualcone{C}]c$.
\end{proof}

The implication of this is that any of the results in this paper that make use of the dual constraints can be restated in terms of the basis form of the dual constraints using a basis for the span of $C$.

\subsection{Recession Cone}

Our second basic result is characterising the recession cone of the feasible regions. This will also be helpful later in the paper when deriving conditions for boundedness since a nonempty closed convex set is bounded if and only if its recession cone contains nonzero elements.

\begin{lemma}	\thlabel{recconeX} 
For $X\neq\emptyset$, we have $\rec{X} = {X(\zeros)} = -\mapA^{-1}(\K)  \cap {C}$ and
\[
\polar{(X(\zeros))} \eq \closure{\dualfeas}, \qquad \text{and} \qquad  \relint{\polar{(X(\zeros))}} \eq \relint{\dualfeas}.
\]
Furthermore, $\polar{(X(\zeros))} = \dualfeas$ when there exists $x\in\relint{C}$ with $-\A[x]\in\relint{\K}$. 
\end{lemma}
\begin{proof}
We will use the fact that for an affine map $\G$, the $\rec{}$ operator 
commutes with the linear map associated with $\G$, i.e., $\rec{\G^{-1}(\X)} = -\L_{\G}^{-1}(\rec{\X})$, where $\G$ is an affine map  with $\G^{-1}(\X)\neq\emptyset$ for a closed convex set $\X$, and $\L_{\G}\colon x\mapsto\G(\zeros)-\G(x)$. 
Using the affine preimage expression for $X$ from \eqref{feaspreimg}, the above claim yields $\rec{X} = -\L_{G_{p}}^{-1}(\rec{(\K\times C)})$. Distributivity of $\rec{}$ over Cartesian product and $\K$ and $C$ being cones leads to $\rec{X} = - \L_{G_{p}}^{-1}(\K\times C)$, which means that $\rec{X} \eq \{x \in C \sep \A[x] \coneleq \zeros \}$. The set on the right-hand side is equal to $X(\zeros)$ from \eqref{def:XY} and can be rewritten as $-\mapA^{-1}(\K)  \cap {C}$.

Since $X(\zeros) = \L_{G_{p}}^{-1}(-\K\times- {C})$, \mythref{polarinv} implies that the polar of $X(\zeros)$ is equal to the closure of $\adjoint{\L}_{G_{p}}(\dualcone{\K}\times\dualcone{C})$, and from \eqref{feascones} we know that this is the closure of $\dualfeas$. The equality for relative interiors follows from the assertion on closure because of the first equality in \mythref{facts} for arbitrary convex sets. 

For the last part, it suffices to argue that $\dualfeas$ is a closed set. The existence of a strict solution in the recession cone means that $\mapA^{-1}(\relint{\K})\neq\emptyset$, and then known conditions for closeness of an adjoint image (cf.~\mythref{linimg}) give us that $\mapAad(\dualcone{\K})$ is a closed cone. Therefore, $\dualfeas$ is a Minkowski difference of two closed cones. \mythref{polarinv} tells us that $\dualfeas$ is the Minkowski sum of the polars of $\mapA^{-1}(-\K)$ and $C$. For this sum to be closed, a sufficient condition from literature (cf.~\citep{pataki2007closedness}) is that the relative interiors of $\A^{-1}(-\K)$ and $C$ intersect. We have $\relint{\A^{-1}(-\K)}\supseteq\A^{-1}(-\relint{\K})$. Strict feasibility implies $\A^{-1}(-\relint{\K})\cap\relint{C}\neq\emptyset$, and therefore, $\dualfeas$ is a closed set.
\end{proof}


\begin{corollary}
A nonempty $X$ is bounded if and only if for any norm $\|\cdot\|$ the convex maximization problem $\max\{\|x\| \colon \A[x] \coneleq \zeros, \ x\in C \}$ has value greater than $0$.
\end{corollary}

Henceforth, whenever we refer to a feasible set being bounded, it is assumed that the set is nonempty.

\subsection{Objective Qualification}

The third basic result to note here is a special case of strong duality that does not require any constraint qualification, instead it requires the objective vectors to belong to specific parts of feasibility cones of the other problem.

\begin{observation}	\thlabel{trivialc}
If the primal is feasible, then strong duality holds when $c\in\mapAad(\K^{\bot})$.
\end{observation}
\begin{proof}
Let $c=\adjoint{\A}(y)$ for some $y\in\K^{\bot}$. Because $\adjoint{\A}(\K^{\bot})\subseteq\mapAad(\dualcone{\K})\subseteq\dualfeas$, we have $c\in\dualfeas$ and hence the dual is feasible with $y\in Y$. Every $x\in X$ has $\A[x] + s = b$ for some $s\in\K$. Then, $\inner{c}{x} = \inner{\mapAad(y)}{x} = \inner{y}{\A[x]} = \inner{y}{b-s} = \inner{y}{b} - \inner{y}{s} = \inner{y}{b}$, where the last equality is due to $y\in\K^{\bot}$ and $s\in\K$. Thus, there is zero duality gap and solvability of the dual.
\end{proof}

After using \mythref{spandual}, the analogous condition for the dual becomes $b\in\A(\lineal{C})$.

\section{Closedness CQ and its Consequences}	\label{sec:closed}

Consider the linear maps
\begin{equation}	\label{def:LM}
\Lp\colon (\alpha,\alpha_{0})\in\E\times\real \mapsto (\A[\alpha]+\alpha_{0}b, \, -\alpha)  \in \EE\times\E, \qquad \adjLp \colon (y,w)\mapsto (\mapAad[y] - w, \, \inner{b}{y}).
\end{equation}
The convex cone $\adjLp(\dualcone{\K}\times\dualcone{C})$ may not be a closed set in general because linear images of closed sets are not closed (cf.~\mythref{linimg}). Closedness of this adjoint image is known to be a sufficient condition for strong duality. 

\begin{subequations}	\label{closedness}
\begin{theorem}[{\citet[Proposition 2.6]{Shapiro_2001}, \citet[Theorem 7.2]{barvinok2002course}}]	\thlabel{strong}
Assume the primal is feasible. Strong duality holds, with the dual being solvable, when 
\begin{equation}	\label{primclosed}
\adjLp(\dualcone{\K}\times\dualcone{C}) \ \text{ is a closed set.} 
\end{equation}
\end{theorem}

\begin{remark}	\thlabel{dualclosedrem}
When the dual is feasible, primal-dual symmetry implies that an analogous sufficient condition for strong duality is 
\begin{equation}	\label{dualclosed}
\adjLd(C\times\K) \ \text{ being a closed set, where } \ 
\adjLd \colon (x,s)\mapsto (\A[x]+s,\, \inner{c}{x})
\end{equation}
is the adjoint of the linear map $\Ld\colon (\beta,\beta_{0})\in\EE\times\real \mapsto (\mapAad[\beta]+\beta_{0}c, \, \beta) \in \E\times\EE$.  We refer to closedness of $\adjLp(\dualcone{\K}\times\dualcone{C})$ as the dual condition because the adjoint image relates to dual feasibility and optimality, 
and closedness of  $\adjLd(C\times\K)$ as the primal condition.
\end{remark}
\end{subequations}

\begin{remark}	\thlabel{closednessrem}
The Closedness CQs in \eqref{closedness} are different than another closedness CQ given by \citet[Theorem 3.2]{boct2006alternative}; the latter was shown to be \emph{necessary and} sufficient for strong duality in abstract convex programs. It is not clear whether \eqref{closedness} is also necessary for strong duality; we leave its relationship to the CQ of \citeauthor{boct2006alternative} as an open question.
\end{remark}

The proofs of \mythref{strong} by \citeauthor{Shapiro_2001} and by \citeauthor{barvinok2002course} are different in their approaches --- the former uses functional analysis arguments for the value function whereas the latter uses geometric arguments using the separation theorem for closed convex sets. For the sake of exposition, we give a sketch of the latter proof by breaking down its main components.

\begin{remark}[On \citeauthor{barvinok2002course}'s proof of \mythref{strong}]	\thlabel{barvinok}
The proof can be essentially broken down into three main steps. For convenience, denote $\X := \adjLp(\dualcone{\K}\times\dualcone{C})$.
\begin{enumerate}
\item First is to argue that when $\optdual$ is finite, which happens  when primal is assumed to be feasible, then $(c,\optdual)$ belongs to the closure of $\X$. This is shown by observing that $\optdual$ is the infimum of values over a closed interval of the real line; in particular, $\optdual = \inf_{t}\{t\colon t\in I\}$ where $I = \{t\in\real\colon (c,t)\in\closure\X\}$.

\item Note that although $(c,\optdual)$ belongs to $\closure\X$. there is no guarantee that this point belongs to $\X$, and this inclusion is equivalent to solvability of the dual due to the definition of the adjoint image in \eqref{def:LM}. In particular, note that if $(c,z) \in \X$, then it must be that $c\in\dualfeas$ and $z \ge \optdual$; this is because of the definition of $\dualfeas$ in \eqref{feascones} and $\optdual = \inf\{\inner{b}{y}\colon c\in\dualfeas \}$. 
Assuming closedness of the adjoint image gives us $(c,\optdual)\in \X$, and since this is dual solvability, it follows that $(c,\optdual-\epsilon) \notin \X$ for every $\epsilon > 0$, because otherwise we would have a contradiction to the optimality of the solution.

\item The third, and main, step uses separation theorem to show that the condition $(c,\optdual-\epsilon) \notin \closure\X$, for some $\epsilon > 0$, is sufficient to guarantee an $\epsilon$-gap between the primal and dual, i.e., $\optdual - \cstar\le\epsilon$. Using the closedness assumption and the second claim that $(c,\optdual-\epsilon) \notin \X$ for every $\epsilon > 0$, leads to $\optdual - \cstar\le \inf\{\epsilon\colon \epsilon > 0\} = 0$, and then $\optdual - \cstar \ge 0$ from weak duality implies that $\cstar = \optdual$, thereby establishing strong duality. \hfill $\diamond$
\end{enumerate}
\end{remark}

The third claim in the above remark is a topological necessary condition for one problem to have finite value and the other problem to be infeasible (resulting in infinite duality gap). 

\begin{corollary}	
Suppose that the dual is solvable and the primal is infeasible. Then it must be that for every $\epsilon > 0$ we have $(c,\optdual-\epsilon)\in\partial\X$, where $\partial\X := \closure\X\setminus\X$ is the boundary of $\X:=\adjLp(\dualcone{\K}\times\dualcone{C})$. 
\end{corollary}

Let us illustrate this necessary condition with a family of examples that have infinite duality gap in every dimension.

\begin{example} \thlabel{exampleAdapted}
For arbitrary $n\ge 3$, let $A = \left[\begin{array}{cc}I_{n-1} & e_{1}  \end{array}\right]$ be an $(n-1)\times n$ matrix, where $I_{n-1}$ is an identity matrix and $e_{1}$ is the column vector $(1,0,\ldots,0)^{\top}\in\real^{n-1}$, and $\C_{n} = \{x\in\real^{n}\colon x_{n}\ge \|(x_{1},\dots,x_{n-1}) \|_{2} \}$ be the Lorentz cone in $\real^{n}$. Consider the primal-dual pair 
\[
\cstar \eq \sup\left\{0 \colon Ax = \ones - e_{1}, \ x\in\C_{n} \right\}, \quad  \optdual \eq \inf\left\{y_{2} + \cdots+ y_{n-1}\colon A^{\top}y \conegeq[\C_{n}] \zeros, \ y\in\real^{n-1} \right\},
\]
With respect to the primal formulation in \eqref{def:primal} we have $\K=\{\zeros\}$, $C=\C_{n}$, $b = \ones - e_{1}$ and $c=\zeros$. The primal constraints $x_{j}=1, j=2,\dots,n-1$ imply that for $x\in\C_{n}$, we must have $x_{n}\ge \sqrt{x_{1}^{2} + n-2}$, but then the first primal constraint $x_{1}+x_{n}=0$ makes the problem infeasible. Hence, $\cstar = -\infty$. The dual is obviously feasible with $y=\zeros$. In fact, the dual optimum is $\optdual = 0$ because $A^{\top}y = (y_{1},\dots,y_{n-1},y_{1})^{\top}$, and so every dual feasible solution has $y_{1}\ge \|(y_{1},\dots,y_{n-1}) \|_{2}$, which implies that $y_{j}=0, j=2,\dots,n-1$. 

The primal linear map and its adjoint from \eqref{def:LM} are $\Lp(\alpha,\alpha_{0}) = (A\alpha + \alpha_{0}(\ones - e_{1}), -\alpha)$ and $\adjLp(y,w) = (A^{\top}y - w, y_{2} + \cdots+ y_{n-1})$. 
Denote $\X = \closure{\adjLp(\dualcone{\{\zeros\}}\times\dualcone{\C}_{n})}$ and note that this is a closed convex cone. Because the Lorentz cone $\C_{n}$ is self-dual, we get that $\X$ is equal to $\closure{\left\{(A^{\top}y - w, y_{2} + \cdots+ y_{n-1})\colon y \in \real^{n-1}, w \in \C_{n} \right\}}$. Take any $\epsilon > 0$. By the separation theorem, the point $(\zeros,-\epsilon)$, which is equal to $(c,\optdual-\epsilon)$, does not belong to $\X$ if and only if there exists $(\alpha,\alpha_{0})\in\polar{\X}$ such that $\inner{\zeros}{\alpha} - \epsilon\alpha_{0} > 0$. Suppose there exists $(\alpha,\alpha_{0})\in\polar{\X}$ with $\epsilon\alpha_{0} < 0$. \mythref{polarinv} tells us that $\polar{\X} = \Lp^{-1}(-\{\zeros\}\times-\C_{n})$. Therefore, it must be that $\Lp(\alpha,\alpha_{0})\in-\{\zeros\}\times-\C_{n}$, which implies that $A\alpha + \alpha_{0}(\ones - e_{1}) = 0$ and $\alpha\in\C_{n}$. The linear equation can be scaled by $-\alpha_{0}$ to get $A\alpha^{\prime} = \ones - e_{1}$ for $\alpha^{\prime} = -\alpha/\alpha_{0}$. Because $\alpha_{0}<0$ due to $\epsilon > 0$ and $\epsilon\alpha_{0}<0$, we have $\alpha^{\prime}\in\C_{n}$ whenever $\alpha\in\C_{n}$. Hence, there must exist some $\alpha^{\prime}\in\C_{n}$ for which $A\alpha^{\prime}=\ones - e_{1}$. However, this is exactly primal feasibility and we already argued that the primal is infeasible, therefore giving us a contradiction to the existence of $\alpha^{\prime}$. Hence, $(\zeros,-\epsilon)$ cannot be separated from the closure of $\adjLp(\real^{n-1}\times\C_{n})$ for any $\epsilon > 0$.
\end{example}


Now we show two sufficient conditions for strong duality that emerge as special cases of the closedness condition. Both of these conditions are known in literature through different proofs. 
The first such condition is the well-known constraint qualification related to strict feasibility of one of the problems, whereas the second condition is boundedness of the feasible region and is perhaps less well-known.



\subsection{Slater CQ}

A problem has the \emph{generalized Slater's constraint qualification} (Slater CQ) when it has strictly feasible solutions, where the sets of such solutions to the primal and dual problems are
\begin{subequations} \label{strictfeas}
\begin{align}	
\strict{X} &\eq \strict{X(b)} \define \left\{x \in \relint{C} \sep \A[x] \conel b \right\},	  \\
\strict{Y} &\eq \strict{Y(c)} \define \left\{y\in\relint{\dualcone{\K}} \sep \mapAad[y] \coneg[\dualcone{C}] c \right\}. 
\end{align}
\end{subequations}
Thus, the primal (resp. dual) has Slater CQ if and only if $\strict{X}\neq\emptyset$ (resp. $\strict{Y}\neq\emptyset$). From \mythref{genfeas} we know that a problem has Slater CQ if and only if the right-hand side of the conic inequality constraints belongs to the relative interior of the feasibility cone for that problem. 

A related notion to strictly feasible points is that of points in the relative interior of $X$, so the question arises whether the two sets $\relint{X}$ and $\strict{X}$ are equal. We know from basic convex analysis that the relative interior of a set is a topological concept and is independent of algebraic representation of the set, and nonempty convex sets have nonempty relative interiors. However, strictly feasible solutions may exist for one algebraic formulation of the set but not for the other. For example, $X = \{x\in\real^{n}_{+}\colon a^{\top}x \le 1,\, -a^{\top}x \le -1  \}$, for some vector $a > \zeros$, has $\strict{X}=\emptyset$, but writing the same set as $X = \{x\in\real^{n}_{+}\colon a^{\top}x = 1\}$ gives $\strict{X} = \{x>\zeros\colon a^{\top}x = 1\}$. When strictly feasible solutions do exist, they indeed form the relative interior of the set.

\begin{observation}	\thlabel{strictfeasrelint}
For $X\neq\emptyset$, we have $\relint{X} = \strict{X}$ if and only if $\strict{X}\neq\emptyset$.
\end{observation}

The arguments for this are elementary and presented in Appendix~\ref{sec:missing} for sake of completeness and because we did not find an explicit reference in literature.


The significance of Slater CQ is that it is well-known to be a sufficient condition for achieving strong duality in convex optimization; there are many proofs of this in literature, for e.g. \citep[Theorem~28.2]{rockafellar1970convex} and \citep[Theorem~11.15 and Remark~11.16]{guler2010found}. For conic problems, there are different specialised proofs of strong duality under Slater CQ, such as \citep[Theorem~7]{luo1997duality}, \citep[Theorem~1.7.1]{bentalnem2001}, \citep[Theorem~4.14]{rusz2006nonlinear}, and \citep[Corollary~4.8]{tunccel2012strong}, and it can also be derived directly from the powerful Fenchel Duality Theorem in convex analysis \citep[Theorem~31.4]{rockafellar1970convex}. We state the result explicitly since it will be used later in this paper.

\begin{theorem}[Slater CQ]		\thlabel{slaterstrong}
If either $\strict{X}\neq\emptyset$ or $\strict{Y}\neq\emptyset$, then strong duality holds. Furthermore, the dual (resp. primal) has an optimal solution when the primal (resp. dual) has Slater CQ.
\end{theorem}

We now present another proof for sufficiency of Slater CQ for strong conic duality by showing that this CQ is a stronger condition than the closedness criteria stated in equations~\eqref{closedness}. The key ingredient of this argument is sufficient conditions for a linear image of a closed convex cone to be a closed set.

\begin{lemma}[cf.~{\citep{pataki2007closedness}}]	\thlabel{linimg}
Given a linear map $\L$ and a non-polyhedral closed convex cone $\C$, the linear image $\adjoint{\L}(\dualcone{\C})$ is a closed set if any of the following conditions hold: 
\begin{enumerate}
\item $\range{\L}\cap \relint{\C} \neq \emptyset$, 
\item $\kernel\adjoint{\L}\cap\relint{\dualcone{\C}}\neq\emptyset$,
\item $\range{\L}\cap\C\subseteq\lineal{\C}$,
\item $\kernel\adjoint{\L}\cap\dualcone{\C}\subseteq\C^{\bot}$. \footnote{Since $\C^{\bot}=\polar{\C}\cap\dualcone{\C}$, a stronger version of the last condition would require that $\kernel\adjoint{\L}\subseteq\polar{\C}$.}  
\end{enumerate}
\end{lemma}

\begin{proposition}
If a problem has Slater CQ then the corresponding closedness condition (either~\eqref{primclosed} or \eqref{dualclosed}) is satisfied and consequently, strong duality holds.
\end{proposition}
\begin{proof}
If closedness condition is satisfied (either for primal or dual), then \mythref{strong} and \mythref{dualclosedrem} tell us that strong duality holds. So we have to argue that if $\strict{X}\neq\emptyset$ (primal Slater CQ) then $\adjLp(\dualcone{\K}\times\dualcone{C})$ is a closed set. We will need the following result which says that the perspective image of the preimage of a cone under the linear map $\Lp$ is exactly the preimage of a cone under the affine map $G\colon x \mapsto (b -\A[x],x)$.

\begin{claim}	\thlabel{perspimg}
Let $\C$ and $\CC$ be two nonempty convex cones, and $\X_{1} = \{(\alpha,\alpha_{0})\in \Lp^{-1}(\CC\times\C)\sep \alpha_{0} > 0 \}$ and $\X_{2} =  G^{-1}(\CC\times\C)$. Denote the perspective map by $\persp:(\alpha,\alpha_{0})\in\E\times\real\setminus\{0\} \mapsto \alpha/\alpha_{0} \in \E$.

Then, $-\persp(\X_{1}) = \X_{2}$.
\end{claim}
\claimproof{
Take any $(\alpha,\alpha_{0})\in\X_{1}$. We have $-\alpha/\alpha_{0}\in\C$ due to $-\alpha\in\C$ and $\alpha_{0}>0$. Also, $\alpha_{0}(b - (\A[(-\alpha/\alpha_{0})])) = \A[\alpha]+\alpha_{0}b\in\CC$ and $\alpha_{0}>0$ implies that $b - (\A[(-\alpha/\alpha_{0})]) \in \CC$. Hence, $-\persp(\X_{1}) \subseteq \X_{2}$. The reverse inclusion $-\persp(\X_{1}) \supseteq \X_{2}$ follows from $G(x) = \Lp(-x,1)$.
}

Because $X = G_{p}^{-1}(\K\times C)$ as per \eqref{feaspreimg}, this claim implies that $X$ is the negative perspective image of the preimage of $\K\times C$ under the map $\Lp$. Because $\strict{X} = G^{-1}(\relint{\K}\times\relint{C})$, applying \mythref{perspimg} with $\CC=\relint{\K}$ and $\C=\relint{C}$ gives us that  $\strict{X}\neq\emptyset$ implies $\Lp^{-1}(\relint{\K}\times\relint{C})\neq\emptyset$. Polarity and $\relint{}$ operators distribute over the Cartesian product, and so $\dualcone{(\K\times C)} = \dualcone{\K}\times\dualcone{C}$ and $\relint{(\K\times C)}=\relint{\K}\times\relint{C}$. Using the first condition in \mythref{linimg} with $\C=\K\times C$ implies that $\adjLp(\dualcone{\K}\times\dualcone{C})$ is a closed set.
\end{proof}

\subsection{Existence of Slater CQ}

We present some necessary and sufficient conditions for Slater CQ to hold.  The definition of strict feasibility tells us that strictly feasible solutions exist only when $X\cap\relint{C}\neq\emptyset$ and so $X\not\subset\boundr{C}$ is necessary for their existence. But this is far from being a sufficient condition. The example $X = \{x\in\real^{n}_{+}\colon a^{\top}x \le 1,\, -a^{\top}x \le -1  \}$ mentioned earlier satisfies $X\not\subset\cup_{j=1}^{n}\{x\colon x_{j}=0\} = \boundr{\real^{n}_{+}}$, but it does not have any strictly feasible solutions because the slacks of the two inequality constraints are negative of each other, implying that $X$ is contained in the subspace formed by the slack variables equal to zero.  We show that the slacks of conic constraints forming a full-dimensional set is a sufficient condition for strict feasibility, and  a necessary condition, under a technicality, is that $X$ have the same dimension as $C$. Define the set of slack values for $X$ as \[\slack{X} \define b - \A(X) \eq \left\{b -\A[x]\sep x\in X\right\}\] and let $\dimn{\cdot}$ denote the affine dimension of a set.

\begin{proposition}		\thlabel{strictfeasrelint2}
We have the following.
\begin{enumerate}
\item $\strict{X}\neq\emptyset$ if $\aff{(\slack{X})} = \linhull{\K}$ and $\emptyset\neq X\not\subset\boundr{C}$.
\item When $\A(\linhull{C})\subseteq\linhull{\K}$, $\strict{X}\neq\emptyset$ only if $\dimn{X} = \dimn{C}$. 
\end{enumerate}
\end{proposition}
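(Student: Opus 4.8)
For the first statement, the plan is to exhibit a relative strictly feasible point directly. Since $X\neq\emptyset$ and $X\not\subset\boundr{C}$, there is a point in $X\cap\relint{C}$, so I would fix such an $x^{0}$ and also pick any $\bar{x}\in\relint{X}$ (which exists because $X$ is a nonempty convex set). The goal is to verify $\bar{x}\in\strict{X}$, i.e.\ that $\bar{x}\in\relint{C}$ and $b-\A[\bar{x}]\in\relint{\K}$, so that $\strict{X}\neq\emptyset$.

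For the slack membership I would note that $\slack{X}\subseteq\K$ by the definition of $X$, and that the map $x\mapsto b-\A[x]$ is affine, so its commutation with $\relint{}$ from \hyperref[facts]{\thref{facts}} gives $\relint{(\slack{X})}=\{b-\A[x]\sep x\in\relint{X}\}$. The hypothesis $\aff{(\slack{X})}=\aff{\K}$ together with $\slack{X}\subseteq\K$ forces $\relint{(\slack{X})}\subseteq\relint{\K}$, because a point of $\relint{(\slack{X})}$ has a relative neighborhood in $\aff{(\slack{X})}=\aff{\K}$ contained in $\slack{X}\subseteq\K$ and is therefore in $\relint{\K}$; hence $b-\A[\bar{x}]\in\relint{\K}$. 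For $\bar{x}\in\relint{C}$, I would invoke the line-segment principle: using $\bar{x}\in\relint{X}$ and $x^{0}\in X\subseteq\aff{X}$, the segment from $x^{0}$ through $\bar{x}$ extends slightly past $\bar{x}$ within $X$, which lets me write $\bar{x}$ as a proper convex combination of $x^{0}\in\relint{C}$ and a point of $X\subseteq\closure{C}$; \citet[Theorem 6.1]{rockafellar1970convex} then yields $\bar{x}\in\relint{C}$. This completes the first part.

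For the second statement, assume $\A(\aff{C})\subseteq\aff{\K}$ and $\strict{X}\neq\emptyset$. Since $X\subseteq C$ always gives $\aff{X}\subseteq\aff{C}$, it suffices to prove the reverse inclusion, which forces $\dimn{X}=\dimn{C}$. I would fix $\bar{x}\in\strict{X}$, so $\bar{x}\in\relint{C}$ and $b-\A[\bar{x}]\in\relint{\K}$, and show that $X$ contains a neighborhood of $\bar{x}$ that is full-dimensional relative to $\aff{C}$. The key observation is that for every $x\in\aff{C}$ we have $\A[x-\bar{x}]=\A[x]-\A[\bar{x}]\in\aff{\K}$, since $\A(\aff{C})\subseteq\aff{\K}$ and $\aff{\K}$ is a subspace (the affine hull of a cone equals its span). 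Writing $b-\A[x]=(b-\A[\bar{x}])-\A[x-\bar{x}]$ as a point of $\relint{\K}$ perturbed by a vector of $\aff{\K}$, the relative openness of $\relint{\K}$ in $\aff{\K}$ shows $b-\A[x]\in\relint{\K}\subseteq\K$ for all $x\in\aff{C}$ sufficiently close to $\bar{x}$, while $x\in C$ for such $x$ because $\bar{x}\in\relint{C}$. Hence a relative neighborhood of $\bar{x}$ in $\aff{C}$ lies in $X$, which gives $\aff{X}=\aff{C}$ and therefore $\dimn{X}=\dimn{C}$.

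The routine convex-analytic steps aside, I expect the genuine content to be two recognitions. In the first statement, $\relint{X}\subseteq\relint{C}$ is \emph{not} automatic from $X\subseteq C$; it fails exactly when $X$ sits in $\boundr{C}$, so the hypothesis $X\not\subset\boundr{C}$ must be fed into the line-segment principle. In the second statement, the technical assumption $\A(\aff{C})\subseteq\aff{\K}$ is precisely what keeps the perturbed slacks inside $\aff{\K}$, so that staying in $\relint{\K}$ becomes a mere smallness condition rather than a genuine feasibility constraint; without it the perturbation could leave $\aff{\K}$ and the neighborhood argument would break down.
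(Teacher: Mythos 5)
Your proof is correct and follows essentially the same route as the paper's: part (1) rests on the identical two observations (commutativity of $\relint{}$ with the affine slack map plus the fact that equal affine hulls force $\relint{(\slack{X})}\subseteq\relint{\K}$), and part (2) uses the same key perturbation idea that $\A$ maps $\aff{C}$ into the subspace $\aff{\K}$, so the strict slack $b-\A[\bar{x}]\in\relint{\K}$ absorbs small moves within $\aff{C}$. The only cosmetic differences are that you place a point of $\relint{X}$ inside $\relint{C}$ via the line-segment principle where the paper argues $\relint{X}\cap\relint{C}\neq\emptyset$ by a closure contradiction, and in part (2) you exhibit a full relative neighborhood of $\bar{x}$ contained in $X$ where the paper exhibits $\dimn{C}+1$ affinely independent points of $X$.
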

\begin{proof}

(1) For a nonempty convex set $X$, we have that $\slack{X}$, which is the affine image $b-\A(X)$, is also a nonempty convex set, and therefore $\relint{(\slack{X})} \neq\emptyset$. Because $\slack{X}\subset\K$, the assumption of equal affine hulls for $\K$ and $\slack{X}$ implies that $\relint{(\slack{X})}\subseteq\relint{\K}$. The commutativity of $\relint{}$ and affine images of convex sets gives us $\relint{(\slack{X})} = b - \A(\relint{X})$. Hence, $b - \A(\relint{X})\subseteq\relint{\K}$. When $X\not\subset\boundr{C}$, we have $\relint{X}\cap\relint{C}\neq\emptyset$ because otherwise $X\subset C = \relint{C}\cup\boundr{C}$, $\closure{(\relint{X})} = X$, and $\boundr{C}$ being a closed set implies the contradiction $X\subset\boundr{C}$. Therefore, there exists some $x\in\relint{C}$ with $b-\A[x]\in\relint{\K}$, and hence $\strict{X}\neq\emptyset$. 

(2) Clearly, $\dimn{X} \le \dimn{C}$ because $X\subseteq C$. Suppose $\strict{X}\neq\emptyset$. Take any $x\in\strict{X}$ and $y\in\linhull{C}$. We have $\A[y] \in \linhull{\K}$ due to the assumption that $\A(\linhull{C})\subseteq\linhull{\K}$. Because $\strict{X} = \A^{-1}(b-\relint{\K})\cap \relint{C}$, there exists some $\epsilon > 0$ such that $x+\epsilon y \in C$ and $b-\A[x] + \epsilon \A[y]\in\K$. The linearity of $\A$ makes $b-\A[x] + \epsilon \A[y]\in\K$ equivalent to $b - \A(x + \epsilon y) \in \K$, and so $x+\epsilon y \in X$. Therefore, for any basis $\{y^{1},\dots,y^{k}\}$ of the subspace $\linhull{C}$, the points $\{x, x + \epsilon y^{1},\dots, x + \epsilon y^{k}\}$ are affinely independent in $X$, implying that $\dimn{X} \ge \dimn{C}$, as desired.
\end{proof}

This leads to a case where full-dimensionality of $X$ is necessary and sufficient for the existence of strictly feasible solutions. 
%

\begin{proposition}
Suppose $C$ is full-dimensional and $\A$ is an injective map with $\range{\A}\subseteq\linhull{\K}$. Then $\strict{X}\neq\emptyset$ if and only if $X\not\subset\boundr{C}$ and $X$ is a full-dimensional set. 
\end{proposition}
\begin{proof}
The only if direction is directly from the second claim in  \hyperref[strictfeasrelint2]{\thref{strictfeasrelint2}}, whereas the if direction is from the first claim in the proposition. To see the if direction, suppose $X$ is a full-dimensional set. We have $\aff{(\slack{X})} = \aff{(b-\A(X))} = b - \aff{\A(X)} =  b - \A(\aff{X})$, which implies $\dimn{(\slack{X})} = \dimn{\A(\aff{X})}$. Recall the basic fact from linear algebra that taking a linear image of a set is a dimension-reducing operation in general, but the dimension is preserved when the linear map is injective. Therefore, when $\A$ is injective, we have $\dimn{\A(\aff{X})} = \dimn{(\aff{X})} = \dimn{X}$. Now, $\dimn{(\slack{X})} = \dimn{\A(\aff{X})}$ argued earlier and full-dimensionality of $X$ implies that $\slack{X}$ is also full-dimensional. Since $\slack{X}\subset\K$, we have $\aff{(\slack{X})}\subseteq\linhull{\K}$ and therefore, full-dimensionality of $\slack{X}$ leads to $\aff{(\slack{X})} = \linhull{\K} = \EE$.
\end{proof}

Our last sufficient condition for existence of Slater CQ is in terms of the homogenous set (cf.~\eqref{def:XY}) $X(\zeros) := \{x\in C\colon\A[x]\coneleq\zeros\}$, which we know from  \mythref{recconeX} to be the recession cone whenever the right-hand side gives feasibility. We argue that when this set is strictly feasible, which by applying equation~\eqref{strictfeas} with $b = \zeros$ means that the strictly feasible solutions in the homogenous set can be described as
\begin{equation}	\label{strictrecX}
\strict{X(\zeros)} \eq -\mapA^{-1}(\relint{\K})\cap\relint{C} \eq \left\{x\in\relint{C}\sep \A[x]\conel \zeros \right\},
\end{equation}
the convex cone $\primalfeas$ of all feasible right-hand sides for the primal is open and contains the span of $\K$. This implies that whenever the primal is feasible, it is also strictly feasible, and that the primal is feasible for all right-hand sides in the span of $\K$.

\begin{proposition}		\thlabel{strictrecXnot}
Suppose that $\strict{X(\zeros)}\neq\emptyset$. Then $\primalfeas = \relint{\primalfeas} \supseteq \linhull{\K}$; in particular, $\strict{X}\neq\emptyset$ if and only if $X$ is feasible, and $X$ is strictly feasible for $b\in\linhull{\K}$.
\end{proposition}
\begin{proof}
For the equality we have to argue the $\subseteq$-inclusion. Take $y\in\strict{X(\zeros)}$ and $x\in X(b)$ for some $b\in\primalfeas$. We have $x+y\in C + \relint{C}$ and $b - \A[(x+y)] = b-\A[x] -\A[y] \in \K + \relint{\K}$. This leads to $x+y\in\strict{X(b)}$ because of the fundamental fact about convex cones that their relative interiors are invariant to addition by the cone (i.e., a closed convex cone $\C$ obeys $\relint{\C}+\C=\relint{\C}$).

%

For the inclusion of span of $\K$, we use the following technical result which says that existence of preimage of relative interior of a cone under a linear map implies that the preimage exists for all affine maps that are a suitable shift of the linear map.

\begin{claim}	\thlabel{relintaff}
Let $\C$ be a closed convex cone. 
If a linear map $\L$ has $\L^{-1}(\relint{\C})\neq\emptyset$, then $\G_{v}^{-1}(\relint{\C})\neq\emptyset$ for the affine map $\G_{v}(\cdot) = v + \L(\cdot)$ with $v\in\aff{\C}$. On the contrary, if an affine map $\G$ has $\G^{-1}(\relint{\C})\neq\emptyset$, then $\G_{\lambda}^{-1}(\relint{\C})\neq\emptyset$ for all affine maps $\G_{\lambda}(\cdot) = (1-\lambda)\G(\zeros)+\lambda\G(\cdot)$ where $\lambda \neq 0$.
\end{claim}
\claimproof{
We consider two cases for the first claim. First, suppose that $\L^{-1}(\relint{\C})\subseteq\kernel\L$. Since $\kernel\L = \L^{-1}(\{\zeros\})$, we have $\zeros\in\relint{\C}$ in this case. It follows from \mythref{relintprop3} that $\C$ must be a linear subspace. This implies $\aff{\C} = \relint{\C}$, and so for every $x\in\L^{-1}(\relint{\C})$ and $v\in\aff{\C}$ we have $\G_{v}x = v + \L x = v+\zeros = v \in \aff{\C} = \relint{\C}$, implying that $\G_{v}^{-1}(\relint{\C})\supseteq\L^{-1}(\relint{\C})\neq\emptyset$. Now suppose that $\L^{-1}(\relint{\C})\nsubseteq \kernel\L$. Therefore, there exists $x\in\L^{-1}(\relint{\C})$ such that $\L x \neq\zeros$. Take any $v\in\aff{\C}$. Then, $\L x\in\relint{\C}$ implies that there exists some $\epsilon > 0$ for which $\L x + \epsilon v \in\relint{\C}$. Scaling by $\epsilon$ and using the fact that $\relint{\C}$ is an open convex cone, gives us $\L(x/\epsilon) + v \in \relint{\C}$. 
Hence, $\G_{v}(x/\epsilon) \in \relint{\C}$, which implies that $x/\epsilon \in \G_{v}^{-1}(\relint{\C})$. 

For the second claim, take any $x\in\G^{-1}(\relint{\C})$. Denote $\L_{\G}(\cdot) = \G \zeros - \G(\cdot)$, which is a linear map. Then, $\G_{\lambda}(\cdot) = \G \zeros - \lambda\L_{\G}(\cdot)$. Because $\G x = \G \zeros - \L_{\G} x \in \relint{\C}$, linearity of $\L_{\G}$ implies that for any $\lambda \neq 0$, we have $\G x = \G \zeros - \lambda\L_{\G}(x/\lambda) = \G_{\lambda}(x/\lambda)$, and so $\G_{\lambda}^{-1}(\relint{\C})\neq\emptyset$.
}

$X(\zeros)$ is the preimage of the cone $\K\times C$ under the linear map $\L\colon x\mapsto (-\A[x],x)$, and $X(b)$ is the preimage of $\K\times C$ under the affine map $(b,\zeros) + \L(x)$. The definition of strict feasibility is that $\strict{X(\zeros)} = \L^{-1}(\relint{\K}\times\relint{C})$. The claim $\strict{X(b)} \neq\emptyset$ follows from \mythref{relintaff} due to $(b,\zeros) \in \linhull{\K}\times\linhull{C}$.
\end{proof}

Strictly feasible solutions may not always exist for the cone $X(\zeros)$; we can apply the  conditions presented earlier in this section with $b=\zeros$ to certify when $\strict X(\zeros)\neq\emptyset$. Note that if $\strict X(\zeros)=\{\zeros\}$, then it means that for each of $\K$ and $C$, either the cone is equal to $\{\zeros\}$ or is a subspace, but even in this case the above proof goes through because we have the cones equal to their relative interiors.

\subsection{Boundedness CQ} \label{sec:boundcond}

For a general convex program
\begin{equation}	\label{convexopt}
\inf\left\{f_{0}(x) \sep f_{i}(x)\le 0 \  i=1,\dots,m, \ x\in S \right\},
\end{equation}
\citet[Theorems 1 and 2]{duffin1978clark} established that if the primal has a bounded feasible set then the Lagrangian dual of \eqref{convexopt} has an unbounded feasible set and strong duality holds. We term this Clark-Duffin theorem as the ``Boundedness CQ'' for strong duality; see \citet{ernst2013zero} for generalisations of it wherein CQs are given in terms of recession directions satisfying certain properties. Since the conic program~\eqref{def:primal} can be reformulated in the form ~\eqref{convexopt} as $\inf_{x,s}\{c^{\top}x \colon \A[x] + s = b, \ x\in C, s\in\K\}$, and it is easy to see that the set $\widehat{X} := \{(x,s)\in C\times\K \colon \A[x] +s = b \}$ is bounded if and only if its projection $X$ is bounded, we have the Boundedness CQ for conic programs. Nevertheless, this CQ can also be argued independently for conic programs. \citet[Theorem~3.15]{schurr2007universal} did this using their results on universal duality for the case of $\K=\{\zeros\}$ (linear equalities in primal) and  $C$ being full-dimensional and pointed. We argue for general cones using conic Gordan's theorem of the alternative.

\begin{corollary}[Boundedness CQ]	\thlabel{boundedstrong}
Suppose that the primal feasible set $X$ is nonempty and bounded. Whenever the dual is feasible, and in particular when $c\in (\lineal{C})^{\bot}$ or when $C$ is pointed, 
we have that strong duality holds; if we also assume that at least one of $\K$ or $C$ is not a subspace and not equal to $\{\zeros\}$, then the dual feasible set $Y$ is unbounded.
\end{corollary}
\begin{proof}
The two statements in \mythref{gordan} are $X(\zeros)\neq\{\zeros\}$ and $\strict{Y(\zeros)} \neq \emptyset$, because the definition in \eqref{strictfeas} tells us the strictly feasible solutions in the dual homogenous set are
\[
\strict{Y(\zeros)} \eq \left\{y\in\relint{\dualcone{\K}}\sep \mapAad[y] \coneg[\dualcone{C}] \zeros \right\}.
\] 
Hence, when $X$ is bounded, the first claim in this lemma tells us that $\strict{Y(\zeros)} \neq \emptyset$. The dual analogue of \mythref{strictrecXnot} gives us strict feasibility of $Y$ whenever $Y\neq\emptyset$, and in particular when $c \in \linhull{\dualcone{C}} = (\lineal{C})^{\bot}$,  then strong duality follows from the dual Slater CQ in \mythref{slaterstrong}. A pointed $C$ has $\dualcone{C}$ being full-dimensional (i.e., $\linhull{\dualcone{C}} =\E$) and so the dual is feasible for all $c\in\E$. Now let us argue unboundedness of $Y$. Non-subspace assumption on $\K$ or $C$ means that at least one of the dual cones is not a subspace. \mythref{relintprop3} implies that $\strict{Y(\zeros)}$ has a nonzero element, and then $Y(\zeros) = \rec{Y}$ from the dual analogue of \mythref{recconeX} leads to $Y$ being an unbounded set.
\end{proof}

Now we discuss sufficient conditions for the primal set to be bounded. Since $\A[(X(\zeros))] = \A(C)\cap-\K$ and $\rec{X} = X(\zeros)$ from \mythref{recconeX}, we know that $\A(C)\cap-\K=\{\zeros\}$ is a necessary condition for boundedness of the primal set. We argue that this condition is also sufficient when $\A$ is injective and $\K$ is pointed. It is not sufficient when only $C$ is pointed, which is the other condition that can make $X$ a pointed set, 
because we could have $\kernel\A\cap C\neq\emptyset$ which would imply existence of a nonzero point in $\rec{X}$ and therefore, unboundedness of $X$.

\begin{proposition}	\thlabel{packbound}
Suppose $X$ is feasible and $\A$ is injective. Then, $X$ is bounded if and only if $\A(C)\cap-\K=\{\zeros\}$. In particular, $X$ is bounded if $\A(C)\subseteq\K$ and $\K$ is pointed, or $\A(C)\subseteq\dualcone{\K}$.
\end{proposition}
\begin{proof}
The only if part is from \mythref{gordan}. For the if part, \mythref{recconeX} tells us that it suffices to show $\A(C\setminus\{\zeros\})\cap-\K=\emptyset$. Suppose there exists a nonzero $x \in C$ for which $\A[x]\in-\K$. Set $y=\A[x]$. Then $y\in\A(C)\cap-\K$. Because $\A(C)\cap-\K=\{\zeros\}$, it must be that $y=\zeros$ and therefore $x\in\kernel\mapA$. The assumption that $\A$ is injective is equivalent to $\kernel\A = \{\zeros\}$, and so $x=\zeros$, which is a contradiction. Therefore, $\XX$ must be bounded. The particular conditions each imply $\A(C)\cap-\K=\{\zeros\}$ because $\K\cap-\K=\{\zeros\}$ for a pointed $\K$ and any closed convex cone $\C$ satisfies $\dualcone{\C}\cap-\C=\{\zeros\}$.
\end{proof}

An unbounded set may or may not have strictly feasible solutions to its recession cone. A pointed set is unbounded if and only if the recession cone of the other problem does not have any strictly feasible solutions. 
We also present a sufficient condition for boundedness in terms of a basis for the span of $C$.

\begin{proposition}	\thlabel{boundbasis}
Let $X\neq\emptyset$ be pointed. It is bounded if and only if $\strict{Y(\zeros)}\neq\emptyset$.

Furthermore, it is bounded if there exists an orthonormal basis $B =\{v^{1},\dots,v^{m}\}$ of $\linhull{C}$ such that $B\subseteq\dualcone{C}$ and $\A(B)\subseteq\K$ with $v^{j}\in\relint{\dualcone{C}}$ and $\A[v^{j}]\in\K\setminus\lineal{\K}$ for some $j$. 
\end{proposition}
\begin{proof}
Pointedness of $X$ means that $\rec{X}\cap-\rec{X} = \{\zeros\}$, and since $\rec{X}=X(\zeros)=-\mapA^{-1}(\K)\cap C$, it follows that either $X(\zeros)=\{\zeros\}$ or at least one of $\mapA^{-1}(\K)$ or $C$ is not a subspace. Then, the characterisation for boundedness follows directly from \mythref{gordan}.

\mythref{dualbasis} tells us that we can replace the adjoint map in the dual with the linear map $\mapA[B]\colon y \mapsto \sum_{j=1}^{m}\inner{\A[v^{j}]}{y}v^{j}$. Therefore, the recession cone of the dual is $\{y\in\dualcone{\K}\colon \mapA[B]y \conegeq[\dualcone{C}]\zeros \}$. \mythref{gordan} applied to this recession cone tells us that to show that $X$ is bounded, it suffices to show that there exists $y\in\relint{\dualcone{\K}}$ such that $\mapA[B]y \in\relint{\dualcone{C}}$. The assumption $\A[v^{j}]\in\K\setminus\lineal{\K}$ implies that $\K\setminus\lineal{\K}\neq\emptyset$, which is equivalent to $\K$ not being a linear subspace. Then \mythref{relintprop3} implies that there exists a nonzero $y\in\relint{\dualcone{\K}}$. The second claim in this lemma combined with the assumption $\A[v^{j}]\in\K\setminus\lineal{\K}$ gives us $\inner{\A[v^{j}]}{y} > 0$. This leads to $\inner{\A[v^{j}]}{y}v^{j}\in\relint{\dualcone{C}}$ due to $v^{j}\in\relint{\dualcone{C}}$ and the fact that $\relint{\dualcone{C}}$ is a cone. The assumptions $v^{i}\in\dualcone{C}$ and $\A[v^{i}]\in\K$ for $i\neq j$ give us $\inner{\A[v^{i}]}{y} \ge 0$ and $\inner{\A[v^{i}]}{y}v^{i}\in\dualcone{C}$, thereby leading to $\sum_{i\neq j}\inner{\A[v^{i}]}{y}v^{i}\in\dualcone{C}$. Since $\mapA[B]y =\inner{\A[v^{j}]}{y}v^{j} + \sum_{i\neq j}\inner{\A[v^{i}]}{y}v^{i}$, and the relative interior of a cone is invariant to addition with the cone, 
we obtain the desired claim $\mapA[B]y \in \relint{\dualcone{C}}$.
\end{proof}

\section{A Theorem of the Alternative on Almost Feasibility}	\label{sec:almostfeas}

Besides strict feasibility, which guarantees existence of Slater CQ, another notion on feasibility is that of \emph{almost feasibility} which is defined for $X(b)$ with respect to some chosen norm $\|\cdot\|$ in $\EE$ as follows:
\begin{equation}	\label{def:almost}
\text{(Almost feasibility of $X$)} \sep \forall\, \epsilon > 0, \ \exists\, b^{\epsilon}\in\EE \ \text{ s.t. } \|b^{\epsilon}\| \le \epsilon \ \text{ and } \ X(b + b^{\epsilon}) \neq \emptyset.
\end{equation}
The definition for $Y(c)$ is analogous. Our definitions are inspired by \citet[\textsection 1.7.2]{bentalnem2001} who call it \emph{almost solvability}. A set that is infeasible but almost feasible is called \emph{weakly infeasible} by \citet{luo1997duality}. Algorithmic aspects and other issues surrounding almost feasibility are discussed by \citet{polik2009new,liu2017exact}. Clearly,
\begin{center}
Strict feasibility $\implies$ Feasibility $\implies$ Almost feasibility.
\end{center}

Our main result here is to show that under some conditions, almost feasibility of the dual is equivalent to membership in the  polar of the recession cone of the primal. 

\begin{theorem}		\thlabel{polaralmost}
$\polar{(X(\zeros))} = \{c\in\E\colon Y(c) \text{ is almost feasible} \}$ when either $\K$ is a subspace or $\mapAad(\relint{\dualcone{\K}})\cap(\lineal{C})^{\bot}\neq\emptyset$.
\end{theorem}


This result can be interpreted as a theorem of the alternative because the assertion tells us that for every $c\in\E$, we have exactly one of the following two statements being true:
\begin{enumerate}
\item either $Y(c)$ is almost feasible, or
\item $\inner{c}{x} > 0$ for some $x\in C$ with $\A[x]\coneleq \zeros$.
\end{enumerate}

\begin{remark}
The analogous version of \mythref{polaralmost} for the dual problem is that
\begin{equation}	\label{dualpolaralmost}
\dualcone{(Y(\zeros))} \eq \{b\in\EE\colon X(b) \text{ is almost feasible} \}
\end{equation}
when either $C$ is a subspace or $\A(\relint{C})\cap\linhull{\K}\neq\emptyset$. This is a generalization of \citet[Proposition 1.7.1]{bentalnem2001} which 
deals with $C=\E$ and a full-dimensional $\K$ and is therefore a special case of our result because it satisfies both of our conditions\footnote{Since \citep{bentalnem2001} deal with the primal as a minimisation problem, the obvious modifications have to be made when comparing their results to ours.}.
\end{remark}

Before proving our theorem, we derive a consequence of it which is the equivalence of feasibility and almost feasibility for bounded sets, and infeasibility of one problem implying an unbounded optimum for the other problem. 

\begin{corollary}	\thlabel{almostequivfeas}
Assume $X(\zeros) = \{\zeros\}$ and either $C$ is a subspace or $\A(\relint{C})\cap\linhull{\K}\neq\emptyset$. The following are equivalent for any $b\in\EE$, 
\begin{enumerate}
\item $b\in\dualcone{(Y(\zeros))}$,
\item $X(b)$ is feasible,
\item $X(b)$ is almost feasible.
\end{enumerate}
Furthermore, if we also have that the primal is infeasible, then $\optdual = -\infty$ when $c\in(\lineal{C})^{\bot}$, otherwise the dual is either infeasible or has unbounded optimum when $c\notin(\lineal{C})^{\bot}$.
\end{corollary}
\begin{proof}
The first and third are equivalent by \eqref{dualpolaralmost}, and second implying the third is  trivially true without any assumptions. Take any $b\in\EE$ is such that $X(b)$ is almost feasible. \mythref{gordan} tells us that $X(\zeros) = \{\zeros\}$ implies $\strict{Y(\zeros)} \neq \emptyset$. The dual analogue of the last part of \mythref{recconeX} gives us $\dualcone{(Y(\zeros))} = \primalfeas$, and then from \eqref{dualpolaralmost} it follows that $b\in\primalfeas$, and so $X(b)$ is feasible.

Now suppose that the primal is infeasible. The first part of this proof tells us that $b\notin\dualcone{(Y(\zeros))}$, which means that there exists some $r\in Y(\zeros)$ such that $\inner{b}{r} < 0$. Since the dual is a minimisation, when it is feasible, the ray $r$ leads to an optimum of $-\infty$. By \mythref{gordan}, $\strict{Y(\zeros)} \neq \emptyset$ and then the dual analogue of \mythref{strictrecXnot} asserts dual feasibility for $c\in\linhull{\dualcone{C}} = (\lineal{C})^{\bot}$. 
\end{proof}

Without the above conditions it is possible to have infeasibility for one and finite optimum for the other as seen in \mythref{exampleAdapted} which has $X(\zeros)\neq\{\zeros\}$.

Now we proceed to prove \mythref{polaralmost} by first establishing some technical results.

\subsection{Lemmata}

The set of all right-hand sides for which the dual problem is almost feasible is sandwiched between the polar cone of $X(\zeros)$ and the relative interior of this polar, and therefore one direction of inclusion in \mythref{polaralmost} is always true without requiring any assumptions.

\begin{lemma}	\thlabel{almostfeas1}
$\polar{(X(\zeros))} \,\supseteq\, \{c\in\E\colon Y(c) \text{ is almost feasible}\} \,\supseteq\, \relint{\polar{(X(\zeros))}}$.
\end{lemma}
\begin{proof}
The second inclusion is due to $\relint{\polar{(X(\zeros))}} = \relint{\dualfeas}$ from \mythref{recconeX} and the fact that strict feasibility implies feasibility which implies almost feasibility. Let us prove the first inclusion by contraposition. Let $c\notin\polar{(X(\zeros))}$. Then there exists some nonzero $r\in X(\zeros)$ for which $\inner{c}{r} > 0$. Pick any norm $\|\cdot\|$ in $\EE$ and set $\delta = \inner{c}{r}$ and $\epsilon = \frac{\delta}{2\|r\|}$. For any $c^{\epsilon}\in\EE$ such that $\|c^{\epsilon}\| \le \epsilon$, we have
\[
\inner{c+c^{\epsilon}}{r} \,\ge\, \inner{c}{r} - \abs{\inner{c^{\epsilon}}{r}} \,\ge\, \inner{c}{r} - \|c^{\epsilon} \| \| r\|  \,\ge\, \inner{c}{r} - \epsilon \|r\| \eq \delta - \frac{\delta}{2} \eq \frac{\delta}{2} \,>\, 0,
\]
where the second inequality is by the Cauchy-Schwarz inequality. Therefore, $c+c^{\epsilon} \notin \polar{(X(\zeros))}$, and then $\polar{(X(\zeros))}\supseteq\dualfeas$ from \mythref{recconeX} implies $c+c^{\epsilon} \notin\dualfeas$, which means that $Y(c+c^{\epsilon}) = \emptyset$. Since $c^{\epsilon}$ was arbitrary up to $\|c^{\epsilon}\| \le \epsilon$, it follows from the definition in \eqref{def:almost} that $Y(c)$ is not almost feasible.
\end{proof}

Define
\[
\Omega \define \left\{c\in\E\colon \nexists y\in\dualcone{\K}, w\in\dualcone{\K}\setminus\lineal{\dualcone{\K}} \text{ s.t. } \mapAad[(y-w)] \conegeq[\dualcone{C}] c\right\}.
\]
We will need a technical lemma on $\Omega$ for proving \mythref{polaralmost}. 
Let us recall the fact that relative interior of a non-subspace convex cone is exactly the points in its span that have a positive product with every point in the dual cone that is not in the orthogonal complement.

\begin{lemma}[cf. {\cite[Theorem 2]{luo1997duality}}]	\thlabel{relintprop}
Suppose $\C$ is a closed convex cone that is not a linear subspace.\footnote{The assertion here also works when $\C$ is a linear subspace, if we interpret the product being positive to be a vacuously true statement and drop it so that $\relint{\C}$ becomes equal to $\aff{\C}$, which is obvious for a subspace.}  
We have $\relint{\C} = \{x\in\aff{\C}\sep \inner{x}{y} > 0, \;\; \forall y \in \dualcone{\C}\setminus\C^{\bot} \}$, and so $\relint{\C} \not\perp \dualcone{\C}\setminus\C^{\bot}$.

Similarly, $\relint{\dualcone{\C}} = \{y \in (\lineal{\C})^{\bot} \sep \inner{y}{x} > 0, \;\; \forall x \in \C\setminus\lineal{\C} \}$ and $\relint{\dualcone{\C}} \not\perp \C\setminus\lineal{\C}$.
\end{lemma}

\begin{lemma}	\thlabel{lemalmost}
$\polar{(\rec{X})}\cap\Omega = \{c\in\Omega\mid Y(c) \text{ is almost feasible}\}$.
\end{lemma}
\begin{proof}
The $\supseteq$ inclusion is from \mythref{almostfeas1}. Take $c\in\Omega$ and suppose $Y(c)$ is not almost feasible. We have to prove that $c\notin\polar{(\rec{X})}$. Choose any $\xi\in\relint{\dualcone{C}}$ and $\gamma\in\relint{\K}$ and consider the conic problem 
\[
z^{\ast} \eq \inf\left\{t_{1}+t_{2}+\inner{\gamma}{w} \sep \mapAad[y] - \mapAad[w] + t_{1}c + t_{2}\xi \conegeq[\dualcone{C}] c, \ y\in\dualcone{\K}, w\in \dualcone{\K}, t_{1},t_{2}\ge 0 \right\}.
\] 
Denote the feasible set by $Y_{\xi}$. Observe that $(\bar{y},\bar{y},1,1)\in\strict{Y_{\xi}}$ for any $\bar{y}\in\relint{\dualcone{\K}}$. The dual problem to $z^{\ast}$ is 
\[
\sup\left\{\inner{c}{x}\sep \A[x]\coneleq\zeros, -\A[x]\coneleq\gamma, \inner{c}{x} \le 1, \inner{\xi}{x} \le 1, x\in C\right\}.
\] 
This has a feasible solution $x=\zeros$ due to $\gamma\in\relint{\K}$. Then strong duality from \mythref{slaterstrong} implies that there exists a feasible $x^{\ast}$ to the dual problem with $\inner{c}{x^{\ast}} = z^{\ast}$. It is clear that $z^{\ast}\ge 0$. We claim that $z^{\ast} > 0$. Because the feasible set of the dual problem is a subset of $\{x\in C\colon \A[x]\coneleq\zeros\} = \rec{X}$, we get $x^{\ast}\in\rec{X}$, and so $z^{\ast} > 0$ implies $c\notin\polar{(\rec{X})}$, thereby finishing our proof.

Suppose for the sake of contradiction that $z^{\ast} = 0$. By \mythref{relintprop} and $\gamma\in\relint{\K}$, the term $\inner{\gamma}{w}$ is equal to zero for any $w\in\dualcone{\K}$ if and only if $w\in\K^{\bot}$. \mythref{spandual} with $\C=\dualcone{\K}$ and the Bipolar Theorem gives us $\K^{\bot} = \lineal{\dualcone{\K}}$. Because $(\bar{y},\bar{y},1,1)\in\strict{Y_{\xi}}$ for any $\bar{y}\in\relint{\dualcone{\K}}$, we know that the objective function is not identically equal to zero over $Y_{\xi}$. Because the infimum is equal to zero, for arbitrarily small $\epsilon > 0$ there exist feasible solutions $(y,w,t)\in Y_{\xi}$ with $t_{1}+t_{2}+\inner{\gamma}{w} = \epsilon$. Then one of the following two things must happen: (i) at least one of $t_{1}$ or $t_{2}$ is strictly positive and $w\in\lineal{\dualcone{\K}}$, or (ii) $t_{1}=t_{2}=0$ and $w\in\dualcone{\K}\setminus\lineal{\dualcone{\K}}$. However, $c\in\Omega$ means that the second case is not possible. In the first case, set $\delta = \epsilon(\|c\|+\|\xi\|)$ and $c^{\delta} = -t_{1}c-t_{2}\xi$. Because $t_{1}, t_{2} \le \epsilon$, we have $\|c^{\delta}\| \le \delta$. Because $\epsilon$ is arbitrary close to zero, $\delta$ is also arbitrary close to zero. Then $\mapAad[(y-w)]\conegeq[\dualcone{C}] c + c^{\delta}$ and $y-w \in \dualcone{\K}$ due to $y\in\dualcone{\K},w\in\lineal{\dualcone{\K}}$ implies that for every $\delta > 0$, we have $y-w\in Y(c+c^{\delta})$, thereby making $Y(c)$ almost feasible, which is a contradiction. 
\end{proof}

We will need the following geometric property of cones that one can enter (resp. exit) a cone in some suitable directions starting from points outside (resp. inside) the cone; this is proved in Appendix~\ref{sec:missing} for completeness.

\begin{lemma}	\thlabel{entercone}
Let $\C$ be a nonempty closed convex cone and $x\in\C$ and $y\in\linhull{\C}$. 
\begin{enumerate}
\item If $y\notin\C$, there exists a finite $t^{\prime} \ge 0$ such that $x+ty\in\C$ for all $0 \le t\le t^{\prime}$ and $x+ty\notin\C$ for all $t > t^{\prime}$. 
Also, $t^{\prime}=0$ only if $x\in\boundr{\C}$.
\end{enumerate}
Now suppose $x\in\relint{\C}$.
\begin{enumerate}[resume]
\item There exists a finite $\delta^{\star}> 0$ such that $y+\delta x\in\relint{\C}$ for all $\delta>\delta^{\star}$. 
\item If $y\notin\C$, then $y+\delta^{\star} x\in\bd{\C}$ and $y+\delta x \notin\C$ for all $0 \le \delta < \delta^{\star}$.
\end{enumerate}
\end{lemma}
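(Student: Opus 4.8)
The plan is to reduce all three parts to understanding how the closed convex cone $\C$ meets a half-line or a line, exploiting convexity, the cone identities $\C+\C\subseteq\C$ and $\lambda\C=\C$ for $\lambda>0$, closedness of $\C$, and the relative-interior identity $\relint{\C}+\C=\relint{\C}$ from \thref{relintprop2}.

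For the first part I would study the parameter set $T:=\{t\ge 0\colon x+ty\in\C\}$. It contains $0$ since $x\in\C$, and it is convex because $x+(\lambda t_{1}+(1-\lambda)t_{2})y$ is the convex combination $\lambda(x+t_{1}y)+(1-\lambda)(x+t_{2}y)$; it is closed as the preimage of the closed set $\C$ under the continuous map $t\mapsto x+ty$. The crucial step is boundedness: if $T$ were unbounded, pick $t_{k}\to\infty$ with $x+t_{k}y\in\C$, divide by $t_{k}$ to get $x/t_{k}+y\in\C$ by the cone property, and let $k\to\infty$, so closedness of $\C$ forces $y\in\C$, contradicting $y\notin\C$. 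Hence $T=[0,t']$ for a finite $t'\ge 0$, which yields both ``for all $t$'' statements. For the last assertion I argue the contrapositive: if $x\in\relint{\C}$ then, because $y\in\aff{\C}$ is an admissible direction, relative openness gives $x+ty\in\relint{\C}\subseteq\C$ for all small $t>0$, so $t'>0$.

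For the second and third parts I would set $D:=\{\delta\ge 0\colon y+\delta x\in\C\}$ and $\delta^{\star}:=\inf D$. As before $D$ is closed and convex, and now it is an upper set: if $\delta\in D$ and $\delta'>\delta$ then $y+\delta'x=(y+\delta x)+(\delta'-\delta)x\in\C+\C\subseteq\C$. To see $D$ is nonempty with $\delta^{\star}$ finite I use $x\in\relint{\C}$: the line $s\mapsto x+sy$ lies in $\aff{\C}$ and passes through the relative-interior point $x$, so $x+y/\delta\in\relint{\C}$ once $\delta$ is large, and scaling by $\delta$ gives $y+\delta x\in\relint{\C}\subseteq\C$. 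Thus $D=[\delta^{\star},\infty)$ with $\delta^{\star}<\infty$. For part (2), given $\delta>\delta^{\star}$ write $y+\delta x=(y+\delta^{\star}x)+(\delta-\delta^{\star})x$, where $y+\delta^{\star}x\in\C$ by closedness and $(\delta-\delta^{\star})x\in\relint{\C}$ by scaling an interior point, so $\relint{\C}+\C=\relint{\C}$ from \thref{relintprop2} gives $y+\delta x\in\relint{\C}$; positivity of the threshold holds since $y\notin\C$ forces $0\notin D$ and hence $\delta^{\star}>0$, while if $y\in\C$ then every positive number is already a valid threshold. For part (3), assuming $y\notin\C$ we obtain $\delta^{\star}>0$ and $y+\delta^{\star}x\in\C$; it lies in $\bd{\C}$ because if it were in $\relint{\C}$ then perturbing by $-sx$, admissible since $x\in\aff{\C}$, would keep us in $\C$ for small $s>0$, contradicting minimality of $\delta^{\star}$; and $0\le\delta<\delta^{\star}$ means $\delta\notin D$, i.e. $y+\delta x\notin\C$.

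The routine ingredients are convexity and closedness of the parameter sets together with the cone arithmetic; the two steps needing the most care are the boundedness argument in part (1), where closedness rules out escape to infinity, and the passage between $\C$ and $\relint{\C}$ in parts (2)--(3), where \thref{relintprop2} and the relative openness of $\relint{\C}$ along directions in $\aff{\C}$ do the work. I expect the only genuinely delicate bookkeeping to be the edge case $y\in\C$ in part (2), where the infimum $\delta^{\star}$ degenerates to $0$ and one simply takes any positive number as the threshold instead.
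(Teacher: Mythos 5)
Your proof is correct. For part (1) it coincides with the paper's argument: the same scaling-plus-closedness limit ($x/t_{k}+y\to y$) rules out escape to infinity, convexity and closedness give the interval structure of $T$, and your contrapositive for the $t^{\prime}=0$ claim (relative openness of $\relint{\C}$ along directions in $\aff{\C}$) is exactly the paper's. For parts (2)--(3) you take a mildly different route: you build $\delta^{\star}$ from scratch as the attained infimum of the closed upper set $D=\{\delta\ge 0 \colon y+\delta x\in\C\}$, whereas the paper reuses part (1) through the scaling correspondence $y+\delta x=\delta\bigl(x+\tfrac{1}{\delta}y\bigr)$: it sets $\delta^{\star}:=1/t^{\prime}$ (with $t^{\prime}>0$ guaranteed by $x\in\relint{\C}$), so the ``not in $\C$ below the threshold'' and boundary claims of part (3) are inherited directly from part (1) rather than argued anew. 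Both proofs finish the relative-interior statement identically, via the decomposition $y+\delta x=(y+\delta^{\star}x)+(\delta-\delta^{\star})x$ together with $\relint{\C}+\C=\relint{\C}$; you cite \thref{relintprop2} for this identity, while the paper re-derives it inline, and both isolate the degenerate case $y\in\C$, where any positive threshold works. The paper's reciprocal trick buys economy, since part (3) needs essentially no new argument once $t^{\prime}$ is understood; your version buys logical independence of the parts and avoids the bookkeeping of the $\delta\leftrightarrow 1/t$ correspondence, at the cost of repeating the closed-interval analysis once more for $D$.
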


\subsection{Proof of \hyperref[polaralmost]{Theorem~\ref{polaralmost}}}

The dual cone of a subspace is also a subspace and so $\K$ being a subspace implies $\dualcone{\K} = \lineal{\dualcone{\K}}$, which makes $\Omega=\E$ in \mythref{lemalmost}. The equality for the polar cone follows from the lemma.

Now let $\mapAad(\relint{\dualcone{\K}})\cap\aff{\dualcone{C}}\neq\emptyset$. The $\supseteq$ inclusion is from \mythref{recconeX}. We argue the $\subseteq$ inclusion by contraposition. Suppose $Y(c)$ is not almost feasible. Choose any $\xi\in\relint{\dualcone{C}}$ and consider the conic problem 
\[
z^{\ast} \eq \inf\left\{t_{1}+t_{2} \sep \mapAad[y] + t_{1}c + t_{2}\xi \conegeq[\dualcone{C}] c, \ y \in \dualcone{\K}, t \ge\zeros \right\}.
\] 
Denote the feasible set $Y_{\xi}$. The dual problem to $z^{\ast}$ is 
\[
\sup\left\{\inner{c}{x}\sep \A[x]\coneleq\zeros, \inner{c}{x} \le 1, \inner{\xi}{x} \le 1, x\in C\right\}.
\] 
This has a feasible solution $x=0$. By assumption, there exists some $\bar{y}\in\relint{\dualcone{\K}}$ with $\mapAad[\bar{y}]\in\aff{\dualcone{C}}$. \mythref{entercone} with $\C=\dualcone{C}$ and $\xi\in\relint{\dualcone{C}}$ imply that $(\bar{y},1,t_{2}) \in \strict{Y_{\xi}}$ for some $t_{2} > 0$. Then strong duality under Slater CQ from \mythref{slaterstrong} implies that there exists a feasible $x^{\ast}$ to the dual problem with $\inner{c}{x^{\ast}} = z^{\ast}$. It is clear that $z^{\ast}\ge 0$. We claim that $z^{\ast} > 0$. Because the feasible set of the dual problem is a subset of $\{x\in C\colon \A[x]\coneleq\zeros\} = \rec{X}$, we get $x^{\ast}\in\rec{X}$, and so $z^{\ast} > 0$ implies that $c\notin\polar{(\rec{X})}$, which finishes our proof by contraposition for the $\subseteq$ inclusion. To argue the claim $z^{\ast} > 0$, suppose that $z^{\ast}=0$. This means that there exist feasible solutions to $Y_{\xi}$ with both $t_{1}$ and $t_{2}$ arbitrarily close to zero. Then for any $\epsilon>0$, we can choose $c^{\epsilon} = -t_{1}c - t_{2}\xi$ for small values of $t_{1}$ and $t_{2}$ to get $\|c^{\epsilon}\|\le\epsilon$, and we would have some $y\in\dualcone{\K}$ such that $\mapAad[y]\conegeq[\dualcone{C}] c+c^{\epsilon}$. But this leads to the contradiction that $Y(c)$ is almost feasible.
\hfill\qedsymbol

\section{Projecting a Conic Set onto a Subspace}	\label{sec:proj}

A well-known result for linear programming is that the orthogonal projection of a polyhedron onto a subspace of variables can be obtained by multiplying the algebraic linear description of the polyhedron by every extreme ray of a specific cone called the projection cone (cf.~\citep[Theorem 1.1]{balas2005projection}). This can be proven using LP strong duality and it is very useful for deriving valid inequalities to mixed-integer programs. It is natural to expect that a similar result holds for sets described using conic inequalities because conic programs also exhibit strong duality, albeit under stronger assumptions than those required for linear programming. We describe the projection of conic sets onto arbitrary linear subspaces by giving a straightforward proof that is motivated by the polyhedral case. Note that our question is different than that of projecting a point onto a conic set, for which algorithms are given by \citet{henrion2012projection}.

Consider the conic set $X = \{x\in C\colon\A[x]\coneleq b\}$ from~\eqref{def:primal} and let $L\subset\E$ be a linear subspace. The projection of $X$ onto $L$ is defined as \[\proj_{L}X \define \left\{x\in L\sep \exists u\in L^{\bot} \text{ s.t. }  x+u\in X\right\}.\] Denote 
\[
\mathscr{C} \define \left\{(y,w)\in\dualcone{\K}\times\dualcone{C}\sep \mapAad[y]-w \in L\right\}.
\]
Clearly, this is a convex cone. It is also a closed set because it is equal to the intersection of two closed sets --- (i) $\dualcone{\K}\times\dualcone{C}$, and (ii) the preimage of the subspace $L$ under the linear map $(y,w)\mapsto \mapAad[y]-w$. The latter is a closed set because the preimage of any closed set under a continuous map is a closed set. Thus, $\mathscr{C}$ is a closed convex cone and so it is generated by its extreme rays.

\begin{theorem}	\thlabel{projection}
Suppose there exists $\hat{y}\in\relint{\dualcone{\K}}$ and $\hat{w}\in\relint{\dualcone{C}}$ such that $\mapAad[\hat{y}]-\hat{w} \in L$. Then,
\[
\proj_{L}X \eq \left\{x\in L \sep \inner{\mapAad[y]-w}{x}[\E] \,\le\, \inner{b}{y}[\EE], \ \forall \text{ extreme rays } (y,w) \in \mathscr{C} \right\}.
\]
\end{theorem}
\begin{proof}
The definition of projection tells us that
\begin{equation}	\label{existu}
\proj_{L}X \eq \left\{x\in L \sep \exists u\in L^{\bot} \text{ s.t. } \A[u] \coneleq b-\A[x], \ u\in C-x \right\}.
\end{equation}
Consider the conic program
\[
\sup_{u}\left\{\inner{\zeros}{u} \sep \A[u] \coneleq b-\A[x], \ u\in C-x, \  u\in L^{\bot}\right\}.
\]
Using the fact that $(L^{\bot})^{\bot} = L$, the dual of the above primal can be easily derived to be
\[
\inf_{y,w}\left\{\inner{b-\A[x]}{y} + \inner{x}{w} \sep (y,w) \in \mathscr{C}\right\}.
\]
The existence of $(\hat{y},\hat{w})$ guarantees strict feasibility of $\mathscr{C}$ because $\relint{L}=L$ for a subspace $L$, and so the dual problem has Slater CQ, and then \mythref{slaterstrong} implies that we have strong duality. Because the dual optimizes over a cone, its value is either 0 or $-\infty$, and so strong duality implies that the dual value is 0 if and only if the primal is feasible. Hence, the existence of a $u$ in equation~\eqref{existu} can be equivalently replaced with the infimum of the dual being zero, which is equivalent to $\inner{b-\A[x]}{y} + \inner{x}{w} \ge 0$ for all $(y,w) \in \mathscr{C}$. Because $\mathscr{C}$ is a cone, it suffices to enforce this nonnegativity for only its extreme rays. Substituting this in equation~\eqref{existu} yields
\[
\proj_{L}X \eq \left\{x\in L \sep \inner{b-\A[x]}{y} + \inner{x}{w} \ge 0, \ \forall \text{ extreme rays } (y,w) \in \mathscr{C} \right\}.
\]
The claimed expression follows after rearranging terms and using the adjoint.
\end{proof}

The cone $\mathscr{C}$ could have uncountably many extreme rays, in which case the projection is defined by an infinite system of linear inequalities. 

Applying \mythref{projection} to a high-dimensional set gives us its  orthogonal projection. 

\begin{corollary}
The orthogonal projection of $\X = \{(x,x')\in C\times C'\colon \A[x]+\mapA[B]x' \coneleq b \}$ onto the $x$-space is equal to
\[
\left\{x\in C\colon \inner{\mapAad[y]}{x} \le \inner{b}{y}, \, \forall \text{ extreme rays } y\in\mathscr{C} \right\},
\]
for the cone $\mathscr{C} = \{y\in\dualcone{\K}\colon \adjoint{\mapA[B]}y \conegeq[\dualcone{C'}] \zeros \}$, when there exists a $y\in\relint{\dualcone{\K}}$ with $\adjoint{\mapA[B]}y \in \relint{\dualcone{C'}}$.
\end{corollary}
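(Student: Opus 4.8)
The plan is to mirror the proof of \hyperref[projection]{\thref{projection}}, but to decide membership point-by-point while keeping the cone constraint $x\in C$ \emph{outside} the duality argument. A point $x$ lies in the projection of $\X$ onto the $x$-space exactly when $x\in C$ and there exists $x'\in C'$ with $\mapA[B]x'\coneleq b-\A[x]$. For a fixed such $x$, I would decide the existence of $x'$ by means of the trivial-objective conic feasibility program $\sup\{\inner{\zeros}{x'}\colon \mapA[B]x'\coneleq b-\A[x],\ x'\in C'\}$, whose value is $0$ when $x$ belongs to the projection and which is infeasible (value $-\infty$) otherwise. This is the same device used in \hyperref[projection]{\thref{projection}}, now applied to the slice variable $x'$ alone.

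Next I would form the conic dual of this auxiliary program. Dualizing $b-\A[x]-\mapA[B]x'\in\K$ with a multiplier $y\in\dualcone{\K}$ and then maximizing over $x'\in C'$, the inner maximum is finite precisely when $\adjoint{\mapA[B]}y\conegeq[\dualcone{C'}]\zeros$, so the dual reads $\inf\{\inner{b-\A[x]}{y}\colon y\in\mathscr{C}\}$ with $\mathscr{C}=\{y\in\dualcone{\K}\colon \adjoint{\mapA[B]}y\conegeq[\dualcone{C'}]\zeros\}$, exactly the cone named in the statement. As in \hyperref[projection]{\thref{projection}}, $\mathscr{C}$ is a closed convex cone, being the intersection of $\dualcone{\K}$ with the preimage of the closed cone $\dualcone{C'}$ under the continuous map $y\mapsto\adjoint{\mapA[B]}y$, and is therefore generated by its extreme rays.

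The key step is to invoke \hyperref[finiteness]{\thref{finiteness}} on this dual. Its feasible region is the cone $\mathscr{C}$, which contains $\zeros$ and is thus feasible, and $\rec{\mathscr{C}}=\mathscr{C}$; the hypothesis that some $y\in\relint{\dualcone{\K}}$ satisfies $\adjoint{\mapA[B]}y\in\relint{\dualcone{C'}}$ is precisely strict feasibility of $\mathscr{C}$, so the theorem applies. It gives that the dual has finite optimum if and only if the auxiliary primal is feasible. Since the dual minimizes a linear functional over a cone, its value is either $0$ (attained at $y=\zeros$) or $-\infty$, and it is finite exactly when $\inner{b-\A[x]}{y}\ge 0$ for all $y\in\mathscr{C}$. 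Chaining the equivalences, $x$ lies in the projection if and only if $x\in C$ and $\inner{b-\A[x]}{y}\ge 0$ for every $y\in\mathscr{C}$. Restricting to extreme rays of $\mathscr{C}$ and rewriting $\inner{b-\A[x]}{y}\ge0$ as $\inner{\mapAad[y]}{x}\le\inner{b}{y}$ via the adjoint identity yields the claimed description.

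I expect the main difficulty to be bookkeeping rather than a deep obstacle. One must keep $x\in C$ as a standing constraint so that only feasibility in $x'$ is dualized; this is exactly what produces the clean inequalities $\inner{\mapAad[y]}{x}\le\inner{b}{y}$ with no $\dualcone{C}$-multiplier, in contrast to the $w$-laden form that a blind application of \hyperref[projection]{\thref{projection}} to the product cone $C\times C'$ would deliver. The other point needing care is verifying that the supplied relative-interior condition is precisely the strict-feasibility hypothesis that \hyperref[finiteness]{\thref{finiteness}} requires of the auxiliary dual, and that feasibility of the auxiliary primal coincides with membership of $x$ in the projection.
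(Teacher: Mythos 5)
Your proof is correct, but it follows a genuinely different route from the paper's. The paper obtains this corollary by instantiating \hyperref[projection]{\thref{projection}} with the cone $C\times C'$, the map $(x,x')\mapsto\A[x]+\mapA[B]x'$, and $L$ equal to the $x$-space: there the projection cone becomes (a cone isomorphic to) $\mathscr{C}\times\dualcone{C}$, because the subspace condition forces the $\dualcone{C'}$-multiplier to equal $\adjoint{\mapA[B]}y$ while leaving the $\dualcone{C}$-multiplier $w$ unconstrained, and reaching the stated clean form then requires splitting the extreme rays of this product: rays of the form $(y,\zeros)$ give the displayed inequalities $\inner{\mapAad[y]}{x}\le\inner{b}{y}$, rays $(\zeros,w)$ collectively say $\inner{w}{x}\ge 0$ for all $w\in\dualcone{C}$, which re-imposes $x\in C$ via the Bipolar Theorem, and the relative-interior hypothesis of \hyperref[projection]{\thref{projection}} collapses to the stated one since $\relint{}$ distributes over Cartesian products and $\relint{\dualcone{C}}\neq\emptyset$. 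You instead keep $x\in C$ outside the duality and re-run the proof of \hyperref[projection]{\thref{projection}} on the slice: your auxiliary feasibility program is genuinely of the form \eqref{def:primal} (cone $C'$, right-hand side $b-\A[x]$, zero objective), its conic dual is the minimization over $\mathscr{C}$, the stated hypothesis is exactly strict feasibility of that dual (whose feasible set coincides with its own recession cone, so either strict-feasibility variant of \hyperref[finiteness]{\thref{finiteness}} applies), and the $0$-or-$-\infty$ dichotomy plus restriction to extreme rays finishes the argument. What the paper's route buys is brevity, being a one-line instantiation modulo the product-ray bookkeeping; what yours buys is a self-contained derivation that explains structurally why no $w$-multiplier survives in the final inequalities. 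Both arguments rest equally on the paper's own implicit convention that the closed convex cone $\mathscr{C}$ is generated by its extreme rays, so this is not an additional gap on your part.
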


\section{Conclusions}

Of the three CQs addressed in this paper, we showed that a Closedness CQ is the weakest condition since it implies the Slater CQ which in-turn implies the Boundedness CQ. We also gave a generalised version of the last CQ using a conic theorem of the alternative on strict feasibility. Another theorem of the alternative on almost feasibility was presented in its most general form. Subspace projections of conic sets were characterised using extreme rays of a projection cone, thereby extending the well-known idea for polyhedral sets. 

An interesting direction of future research would be to establish more connections between the different sufficient conditions for strong duality in literature, particularly for abstract convex programs. In particular, it would be good to know whether the Closedness CQ used in this paper is also necessary for strong duality, and how it relates to another known CQ  (cf.~\mythref{closednessrem}).


\subsubsection*{Acknowledgements}
During the initial part of this project, the first and fourth author (TA and AS) were supported by the National Science Foundation (USA) grant CMMI-1933373, and the second author (AG) was supported by the National Science Foundation (USA) grant DMS-1913294 when he was in the School of Mathematical \& Statistical Sciences at Clemson University.

{
\newrefcontext[sorting=nyt]
\printbibliography
}

\newpage

\begin{appendices}

\section{Missing Proofs}	\label{sec:missing} 

\begin{proof}[\textbf{Proof of \mythref{genfeas}}]
The equalities for $\primalfeas$ and $\dualfeas$ are straightforward from their definitions and equations~\eqref{def:XY}. The relative interiors follow from basic properties of $\relint{}$ given in \mythref{facts}. Distributivity over sum gives us $\relint{\primalfeas} = \relint{\A(C)} + \relint{\K}$, and commutativity with a linear map leads to $\relint{\primalfeas} = \A(\relint{C}) + \relint{\K}$. The definition of strict feasibility makes it easy to see that $\strict{X(b)}\neq\emptyset$ if and only if $b\in\A(\relint{C}) + \relint{\K}$.
\end{proof}

\begin{proof}[\textbf{Proof of \mythref{relintprop3}}]
\mythref{relintprop} implies that $\relint{\dualcone{\C}} \cap \C^{\bot}= \emptyset$. Because $\C^{\bot}\subseteq \dualcone{\C}$ and $\dualcone{\C}$ is the disjoint union $\relint{\dualcone{\C}} \cup \boundr{\dualcone{\C}}$, it follows that $\C^{\bot}\subseteq\boundr{\dualcone{\C}}$. Substituting $\C$ with $\dualcone{\C}$ and using the Bipolar Theorem and \mythref{spandual} transforms $\C^{\bot}\subseteq\boundr{\dualcone{\C}}$ to $\lineal{\C}\subseteq\boundr{\C}$. The second claim follows from the first claim due to $\zeros\in\lineal{\C}\cap\C^{\bot}$, $\relint{\C}\cap\boundr{\C}=\emptyset$ and $\relint{\dualcone{\C}}\cap\boundr{\dualcone{\C}}=\emptyset$.
\end{proof}

\begin{proof}[\textbf{Proof of \mythref{gordan}}]
The first statement being false means that $X(\zeros)=\{\zeros\}$. Then, $\polar{(X(\zeros))} =\E$, and so \mythref{recconeX} gives us $\closure{\dualfeas} = \E$, which  implies that $\zeros\in\relint{(\closure{\dualfeas})} = \relint{\dualfeas}$. In fact, it is easy to argue that the second statement is equivalent to $\zeros\in\relint{\dualfeas}$. Hence, the second statement holds because of the description of $\relint{\dualfeas}$ from \mythref{genfeas}. If $X(\zeros)$ is not a subspace and second statement is true, we have $\zeros\in\relint{\dualfeas} = \relint{\polar{X(\zeros)}}$. \mythref{relintprop3} and the non-subspace assumption implies that $X(\zeros)$ must be equal to $\{\zeros\}$.
%
%
\end{proof}

\begin{proof}[\textbf{Proof of \mythref{strictfeasrelint}}]
The only if direction is due to the feasibility of a convex set being equivalent to the feasibility of its relative interior. Now suppose $\strict{X}\neq\emptyset$. The strict conic inequality $\A[x]\conel b$ is defined as $b - \A[x] \in \relint{\K}$, which is equivalent to  $x\in\A^{-1}(b-\relint{\K})$. We will use the basic properties from \mythref{facts}. The map $\K \mapsto b-\K$ is affine and then commutativity of an affine map with $\relint{}$ implies that $b-\relint{\K} = \relint{(b-\K)}$, leading to $\A[x]\conel b$ if and only if $x\in \A^{-1}(\relint{(b-\K)})$. Thus, $\strict{X} = \A^{-1}(\relint{(b-\K)}) \cap\relint{C}$, and so $\A^{-1}(\relint{(b-\K)})$ is nonempty. Using the third claim from the lemma  gives us $\strict{X} = \relint{\A^{-1}(b-\K)}\cap\relint{C}$. Because $X = \A^{-1}(b-\K) \cap C$, the distributivity of $\relint{}$ over nonempty intersection implies that $\relint{X} = \relint{\A^{-1}(b-\K)}\cap\relint{C}$, giving us the desired equality $\relint{X} = \strict{X}$.
\end{proof}

\begin{proof}[\textbf{Proof of \mythref{entercone}}]
Suppose $y\notin\C$ and $x+ty\in\C$ for all $t \ge 0$. Because $\C$ is a cone, for $t>0$, we have $\frac{1}{t}(x+ty) = \frac{x}{t} + y \in\ C$. Because $\C$ is closed, $\lim_{t\to\infty}\frac{x}{t} + y \in \C$. But this limit is equal to $y$, giving a contradiction to $y\notin\C$. Hence, there exists some $\tilde{t} \ge 0$ for which $x+\tilde{t}y\notin\C$. Convexity of $\C$ and $x\in\C$ implies that $x+ty\in\C$ is not possible for any $t\ge \tilde{t}$. Taking $t^{\prime} := \inf\{\tilde{t} \sep x+ \tilde{t} y\notin\C\}$, which is equal to $\sup\{t\sep x+ty\in\C \}$, finishes our first claim on existence of a $t^{\prime}$. The second claim on $t^{\prime}=0$ is obvious from $\linhull{\C}$ being a subspace and $x+\epsilon y\in\C$ for some small $\epsilon > 0$ when $x\in\relint{\C}$ and $y\in\linhull{\C}$.

We consider two cases: $y\in\C$ and $y\in\linhull{\C}\setminus\C$. In the first case, $\delta^{\star}$ can be any positive scalar because $\relint{\C}+\C = \relint{\C}$. Now consider the second case $y\in\linhull{\C}\setminus\C$. First we argue the existence of $\delta^{\star}>0$ such that $y+\delta x\in\C$ for all $\delta\ge\delta^{\star}$. Consider $t^{\prime}$ from the first claim in this proof. The value of $t^{\prime}$ is equal to 0 if and only if $x+ty\notin\C$ for all $t>0$. When $x\in\relint{\C}$, there exists some small enough $\epsilon > 0$ such that $x+\epsilon d \in \C$ for all $d\in\aff{\C}$. Taking $d=y$ implies $x+\epsilon y\in\C$, and therefore $t^{\prime} > 0$. Set $\delta^{\star} := 1/t^{\prime}$; we have $\delta^{\star} \in (0,\infty)$ due to $t^{\prime}\in(0,\infty)$. Take any $\delta \ge \delta^{\star}$. Because $1/\delta \in (0,t^{\prime}]$, we have $x+\frac{1}{\delta}y \in\C$. This leads to $\delta(x+ \frac{1}{\delta} y) = y + \delta x \in \C$ because $\C$ is a cone. If $y+\delta x\in\C$ for some $0\le \delta < \delta^{\star}$, then $\frac{1}{\delta}y + x \in\C$, a contradiction to the first claim for $t = 1/\delta$ which is larger than $t^{\prime} := 1/\delta^{\star}$. Hence, we have $y+\delta x\notin\C$ for all $0\le \delta < \delta^{\star}$. It follows then that $y+\delta^{\star}x \in \boundr{\C}$ because $y\notin\C$. Finally, consider $y+\delta x$ for $\delta>\delta^{\star}$. 
Because $y+\delta x = y+\delta^{\star}x + (\delta-\delta^{\star})x$, and $y+\delta^{\star}x\in\boundr{\C}$ and $(\delta-\delta^{\star})x\in\relint{\C}$ due to $\delta>\delta^{\star}$, $\relint{\C}+\C = \relint{\C}$ gives us $y+\delta x\in\relint{\C}$.
\end{proof}


\end{appendices}

\end{document}